\newcommand{\required}[1]{\section*{\hfil \sharp1\hfil}}
\newcommand{\Beq}{\begin{equation}}
\newcommand{\Eeq}{\end{equation}}
\newcommand{\be}{\begin{equation}}
\newcommand{\ee}{\end{equation}}
\newcommand{\beq}{\begin{equation*}}
\newcommand{\eeq}{\end{equation*}}
\newcommand{\bal}{\begin{align}}
\newcommand{\eal}{\end{align}}
\newcommand{\s}{\mathcal S}
\newcommand{\BP}{\mathbb P}
\newcommand{\CH}{\mathcal H}
\newcommand{\br}{\mathbb{R}}
\newcommand{\N}{\mathbb{N}}
\newcommand{\ik}{\varphi}
\newcommand{\pa}{\partial}
\newcommand{\al}{\alpha}
\newcommand{\dd}{\text{d}}
\newcommand{\fks}{\phi_{l}}
\newcommand{\alst}{\alpha_{\star}} 
\newcommand{\ga}{\gamma}  
\newcommand{\chx}{\check x}  
\newcommand{\chy}{\check y}  
\newcommand{\Ner}{N_\e^{\text{rec}}}
\newcommand{\Hc}{\mathcal{H}}
\newcommand{\Eb}{\mathbb{E}}
\newcommand{\Pb}{\mathbb{P}}
\newcommand{\Rb}{\mathbb{R}}
\newcommand{\Sb}{\mathbb{S}}
\newcommand{\Zb}{\mathbb{Z}}
\newcommand{\Ab}{\vec{\alpha}}
\newcommand{\e}{\epsilon}
\newcommand{\de}{\delta}
\newcommand{\cx}{Q}
\newcommand{\DTB}{\text{DTB}}
\newcommand{\frec}{f_\e^{\text{rec}}}
\numberwithin{equation}{section}
\newtheorem{theorem}{Theorem}[section]
\newtheorem{lemma}[theorem]{Lemma}
\newtheorem{definition}[theorem]{Definition}
\newtheorem{corollary}[theorem]{Corollary}
\newtheorem{assumption}[theorem]{Assumption}
\theoremstyle{definition}
\title{Local reconstruction analysis of inverting the Radon transform in the plane from noisy discrete data}
\author[$\dagger$]{Anuj Abhishek} 
\author[$*$]{Alexander Katsevich}
\author[$+$]{James W. Webber}
\affil[$\dagger$]{\small Department of Mathematics, Applied Mathematics and Statistics, Case Western Reserve University, USA\par axa1828@case.edu}
\affil[$*$]{\small Department of Mathematics, University of Central Florida, USA\par alexander.katsevich@ucf.edu}
\affil[$+$]{\small Brigham and Women's Hospital and Harvard University, USA\par jwebber5@bwh.harvard.edu }
\date{}
\begin{document}
\maketitle

\begin{abstract} 
In this paper, we investigate the reconstruction error, $N_\e^{\text{rec}}(x)$, when a linear, filtered back-projection (FBP) algorithm is applied to noisy, discrete Radon transform data with sampling step size $\epsilon$ in two-dimensions. Specifically, we analyze $N_\e^{\text{rec}}(x)$ for $x$ in small, $O(\e)$-sized neighborhoods around a generic fixed point, $x_0$, in the plane, where the measurement noise values, $\eta_{k,j}$ (i.e., the errors in the sinogram space), are random variables. The latter are independent, but not necessarily identically distributed.   We show, under suitable assumptions on the first three moments of the $\eta_{k,j}$, that the following limit exists:
$N^{\text{rec}}(\chx;x_0) = \lim_{\e\to0}N_\e^{\text{rec}}(x_0+\e\chx)$, for $\check x$ in a bounded domain. Here, $N_\e^{\text{rec}}$ and $ N^{\text{rec}}$ are viewed as continuous random variables, and the limit is understood in the sense of distributions. Once the limit is established, we prove that $N^{\text{rec}}$ is a zero mean Gaussian random field and compute explicitly its covariance. In addition, we validate our theory using numerical simulations and pseudo random noise.
\end{abstract}

\section{Introduction}
An important practical goal in computed tomography (CT) applications is to understand the relation between resolution in image reconstruction and the rate of sampling at which the (noisy) discrete tomographic data is collected. In particular, often one needs to understand at what resolution the \textit{singularities} (non-smoothness) of the original object appear in the reconstructed images. The question of resolution of singularities is extremely important in many applications, such as medical imaging, nondestructive evaluation, metrology, luggage and cargo scanning for threat detection, to name a few. 

As an example, consider medical imaging. By itself, this is a very diverse area. Different areas of the body may have a variety of pathologies, each leading to specific requirements for image quality and, in particular, image resolution. We will briefly outline one such task, namely detection and assessment of lung tumors in CT images. Typically, malignant tumors (e.g., lung nodules) have rougher boundaries than benign tumors \cite{dhara2016combination,dhara2016differential}. When diagnosing whether a lung nodule is malignant or benign, the roughness of the nodule boundary is a critical factor. Typically, reconstructions from discrete X-ray CT data will lead to images in which the singularities are smoothed to some extent. In particular, a rough boundary in the original object (tumor) may appear as a smoothed boundary in the reconstructed image. This can lead to a cancerous tumor being misdiagnosed as a benign nodule. On the other hand, due to the random noise in the data, the boundary of a benign nodule may appear rougher than it actually is, which may again result in misdiagnosis.  This example illustrates the need to accurately quantify the effects of both data discretization and random noise on {\it local} tomographic reconstruction.

In a series of articles \cite{Katsevich2017a,kat19a,Katsevich2020b,Katsevich2020a,Katsevich2021a}, the authors developed a novel approach, called \textit{local reconstruction analysis} (LRA), to analyze the resolution with which the singularities (i.e., jump discontinuities) of an object, $f$, are reconstructed from discrete tomographic data in a deterministic setting, i.e., in the absence of noise. In those articles the singularities of $f$ are assumed to lie on a smooth curve, denoted $\s$. Later, in \cite{Katsevich2022a,Katsevich2023a,Katsevich_2024_BV}, LRA was extended to functions on the plane with jumps across rough boundaries (e.g., fractals). In \cite{Katsevich_aliasing_2023}, LRA theory was further advanced to include analysis of aliasing (ripple artifacts) at points away from the boundary.

To illustrate the key idea of LRA, let us consider the reconstruction from discrete, noise-free CT data in $\Rb^2$ that is sampled in the the angular and affine variables at $O(\epsilon)$ step-sizes. LRA fully describes the behaviour of the reconstructed image in an $O(\epsilon)$-sized neighborhood of a generic point, $x_0\in\s$, where $\s$ denotes the singular support of $f$.  More precisely, let $f$ be represented by a real-valued function in $\Rb^2$, and let $\s\subset\br^2$ be a smooth curve. Let $\frec$ be an FBP reconstruction of $f$ from discretely sampled Radon Transform (RT) data with sampling step size, $\e$. Under appropriate conditions on $\s$, it is shown in \cite{Katsevich2022a,Katsevich2023a} that, for any $c>0$ one has
\be\label{DTB new use}
\lim_{\e\to0}\frec(x_0+\e\check x)=\Delta f(x_0)\DTB(\check x;x_0),\ |\check x|\le c,x_0\in\s,
\ee
where DTB, which stands for the \textit{Discrete Transition Behavior}, is an easily computable function independent of $f$, and depends only on the curvature of $\s$ at $x_0$. Here, $\Delta f(x_0)$ is the value of the jump of $f$ across $x_0$. 

When $\e$ is sufficiently small, the right-hand side of \eqref{DTB new use} is an accurate approximation of $\frec$ and the DTB function describes accurately the smoothing of the singularities of $f$ in $\frec$.

Among alternative approaches to study resolution, the most common one is based on sampling theory, see e.g. \cite{far04,nat93,pal95}. The usual assumption in these works is that the function $f$ to be reconstructed is essentially bandlimited in the Fourier domain, i.e., $f$ is smooth. A more recent approach to study reconstruction from discrete RT data uses tools of semiclassical analysis  \cite{Monard2021,stef20}. The assumptions here are more flexible than in classical sampling theory, but still assume that the data represent measurements of a semiclassically bandlimited signal. This means that either $f$ itself or the detector aperture function is semiclassically bandlimited and hence $C^{\infty}$ with non-compact support. In practical settings, these assumptions do not hold.

Two major sources of error in CT reconstruction are data discretization and the presence of random noise. LRA provides an accurate, local description of a reconstructed image from discretized data (e.g., as described by \eqref{DTB new use}). 
In this article, we extend LRA to include the effects of random noise on the reconstructed image. 

We consider an additive noise model, $\hat{f} = Rf + \eta$, where $R$ denotes the classical 2D Radon transform, $\hat{f}$ is the measured sinogram, and $\eta$ is the noise. The entries, $\eta_{k,j}$, of $\eta$ are assumed to be independent, but not necessarily identically distributed. Due to linearity of $R^{-1}$, the recontruction error is $R^{-1}\eta$. We analyze this error in a discrete setting; specifically the effects of applying FBP purely to noise. To this end, let $N_\e^{\text{rec}}(x)$ be an FBP reconstruction only from $\eta$. Similarly to what was done earlier, we consider $O(\e)$-sized neighborhoods around any $x_0\in \mathbb{R}^2$. In this work, $x_0$ is not constrained to a 1D curve as in previous literature on LRA.  Let $C(D)$ be the space of continuous functions on $D$, where $D\subset\br^2$ is a bounded domain. In our first main result, we show, under suitable assumptions on the first three moments of the $\eta_{k,j}$, the boundary of $D$, and $x_0$, that the following limit exists:
\be\label{noise rec 0}
N^{\text{rec}}(\chx;x_0)=\lim_{\e\to0}N_\e^{\text{rec}}(x_0+\e\chx),\ \check x\in D.
\ee
Here, $N^{\text{rec}}$ and $N_\e^{\text{rec}}$ are viewed as $C(D)$-valued random variables, and the limit is understood in the sense of distributions. Once the limit is established, we then go on to prove that $N^{\text{rec}}$ is a zero mean Gaussian random field (GRF) and compute explicitly its covariance. Numerical experiments, where the $\eta_{k,j}$ are simulated using a random number generator, are also also conducted to validate our theory. The results show an excellent match between our predicted theory and simulated reconstructions.

Taken together, \eqref{DTB new use} and \eqref{noise rec 0} provide a complete and accurate local description of the reconstruction error from discrete data in the presence of noise. This contrasts with global descriptions, which estimate the reconstruction error in some global (e.g., $L^2$) norm. The main novelty, and advantage of our two formulas is that they describe the reconstruction error at the scale of the data step-size ($\sim\hspace{-1mm}\e$), which is important in many CT applications (e.g., precise imaging of tumor boundaries in medical CT). No additional processing, such as smoothing at scales $\gg\e$ that may be necessary to establish convergence results, are applied. Thus, LRA allows statistical inference in $O(\e)$ size neighborhoods (i.e., at native resolution) of any $x_0$ (including boundary points) while accounting for both data-discretization and non-identically distributed random noise. To the best of the authors’ knowledge, this is the first ever result of such kind.

We will now survey some pertinent results in the existing literature related to reconstruction from noisy data and discuss how they compare to the proposed theory.
A statistical kernel-type estimator for the RT was derived in \cite{Tsybakov_1991,Tsybakov_92}, and the minimax optimal rate of convergence of the estimator to the ground truth, i.e. $f$, is established at a fixed point and in a global $L^2$ norm. The data are assumed to be collected at a random set of points rather than on a regular grid, which is the most common case in practice. In \cite{Bissantz2014}, the rate of convergence of the maximal deviation of an estimator from its mean is obtained for similar kernel-type estimators. Most notably, \cite{Bissantz2014,Tsybakov_1991,Tsybakov_92} establish an asymptotic convergence of an estimator to $f$ using additional smoothing at a scale $\de\gg\e$, which results in a significant loss of resolution in practice.
In \cite{Cavalier_98,Cavalier_00}, the accuracy of pointwise asymptotically optimal (in the sense of minimax risk) estimation of a function from noise-free RT data sampled on a random grid is derived. 
In all the works cited above the functions being estimated are assumed to be sufficiently smooth. Also, they do not investigate the probability distribution of the reconstructed noise (error in the reconstruction) neither pointwise nor in a domain. 

Approaches to study reconstruction in the framework of Bayesian inversion have been proposed as well \cite{Monard_19,Siltanen2003,Lassas_09}. In \cite{Monard_19}, the authors investigate global inversion in a continuous data setting assuming that the noise in the data is a Gaussian white noise. Using a Gaussian prior (i.e., with Tikhonov regularization), they establish the asymptotic normality of the posterior distribution and of the MAP estimator for quantities of the kind $\int f(x)\psi(x)\dd x$, where $\psi\in C^\infty$. See also \cite{Siltanen2003,Lassas_09} for a discussion of various aspects of Bayesian inversion.

An approach to study reconstruction errors using semiclassical analysis is developed in \cite{stef_23}. The goal in \cite{stef_23} is to analyze empirical spatial mean and variance of the noise in the inversion for a single experiment, as the sampling rate goes to zero. We analyze the reconstruction error value density and compute the expected value and covariance across multiple reconstructions.

Analysis of noise in reconstructed images is also an active area in more applied research, see e.g. \cite{Noo_noise_2008, Divel2020} and references therein. In these works, the methodology is mostly a combination of numerical and semi-empirical approaches, and theoretical analyses of the noise behaviour in small neighbohoods, such as those proposed here, are not provided.

%
%

The paper is organized as follows. In section \ref{sec:setting_mainres}, we give a mathematical formulation of our problems and state the main results. In section \ref{sec:main proofs}, we state and prove our main theorems while deferring some key technical results to section \ref{sec: 2nd DTB}. In section \ref{sec:numerics}, we validate our main results through simulated numerical experiments. Finally, we collect the proofs of some auxiliary lemmas in Appendices \ref{approx_step}, \ref{sec:first two lems}, and \ref{sec:two lems}.

\section{Setting of the problem and main results.}\label{sec:setting_mainres}
We now describe the problem of reconstructing a function $f(x)$, $x\in \Rb^2$, from discretely sampled noisy Radon Transform  (RT) data. Let us first define the parameters that we use to discretize the observation space, $\Sb^1\times [-P,P]$. To this end, let:
\be\label{params}
\al_k=k\Delta\al,\ p_j=\bar p+j\Delta p,\ \Delta p=\e,\ \Delta\al/\Delta p=\kappa,
\ee 
where $\kappa>0$ and $\bar p$ are fixed.
We parametrize $\vec\alpha_k\in \Sb^1$ by $\vec\alpha_k=(\cos\alpha_k,\sin\alpha_k)$. Similarly, the radial (signed) distance is discretized as $p_j=\bar{p}+j\Delta p$. We will loosely refer to $\epsilon$ as the data step-size. The discrete noisy tomographic data is modeled as:
\be\label{disc_model}
\hat{f}_{\e,\eta}(\al_k,p_j)= Rf(\al_k,p_j)+ \eta_{k,j},
\ee
where $Rf(\al_k,p_j)$ is the Radon transform of the function at the grid point $(\al_k,p_j)$ in the observation space and $\eta_{k,j}:=\eta(\al_k,p_j)$ are random variables that model noise in the observed data. We assume $\eta_{k,j}$ are independent but not necessarily identically distributed. We make the following assumptions on the first three moments of the random variables $\eta_{k,j}$. Here and below, $\Eb(X)$ denotes the expected value of a random variable $X$.

For convenience, throughout the paper we use the following convention. If a constant $c$ is used in an equation or an inequality, the qualifier ‘for some $c>0$’ is assumed. If several $c$’s are used in a string of (in)equalities, then ‘for some’ applies to each of them, and the values of different $c$’s may all be different. For example, in the string of inequalities $f\le cg \le ch$, the values of $c>0$ in two places may be different.

We now state our main assumptions on the measurement noise.

\begin{assumption} {(Assumptions on noise)}\label{noi}
\begin{enumerate}
\item $\Eb(\eta_{k,j})=0$.
\item $\Eb\eta_{k,j}^2=\sigma^2(\al_k,p_j)\Delta\al$ for some  $\sigma\in C^1([-\pi,\pi]\times[-P,P])$.
\item $\Eb \lvert \eta_{k,j}\rvert^3\leq c(\Delta \alpha)^{3/2}$.
\end{enumerate}
\end{assumption}

We also select an interpolating kernel, $\ik$.
\begin{assumption}{(Assumptions on the kernel $\ik$)}\label{interp}
\begin{enumerate}
    \item $\ik$ is compactly supported.
    \item $\ik^{(M+1)}\in L^\infty(\br)$ for some $M\ge 3$.
    \item $\int \ik(t)dt=1$.
\end{enumerate}
\end{assumption}

Usually we make an additional assumption that $\ik$ exactly interpolates polynomials up to some degree $m_{\max}$ \cite{Katsevich2017a, kat19a, Katsevich2020a, Katsevich2020b, Katsevich2021a}: 
\be\label{interp cond}
\sum_{j\in \Zb}j^m\ik(t-j)=t^m,\ t\in \Rb,\ m=0,1,\dots,m_{\max}.
\ee
Here this assumption is not needed. In particular, $\ik$ may account for the effects of smoothing that can be used to reduce noise in the reconstruction. In this case, $\ik$ no longer satisfies \eqref{interp cond}.

Denoting the Hilbert transform of a function by $\Hc(\cdot)$, the reconstruction formula from the data \eqref{disc_model} is given by:
\begin{align}\label{recon-orig}
f^{\text{rec}}_{\e,\eta}(x)&=-\frac{\Delta\al}{4\pi\e}\sideset{}{_{|\al_k|\le \pi}}\sum \sideset{}{_j}\sum\CH \ik^{\prime}\left(\frac{\vec\al_k\cdot x-p_j}\e\right)\hat f_{\e,\eta}(\al_k,p_j)\nonumber\\
&= f^{\text{rec}}_{\e}(x)+N^{\text{rec}}_{\e}(x),
\end{align}
where we define
\be\label{fN recons}\begin{split}
   f^{\text{rec}}_{\e}(x)&  :=-\frac{\Delta\al}{4\pi\e}\sideset{}{_{|\al_k|\le \pi}}\sum \sideset{}{_j}\sum\CH \ik^{\prime}\left(\frac{\vec\al_k\cdot x-p_j}\e\right) Rf(\al_k,p_j),\\
   N^{\text{rec}}_{\e}(x)&:=-\frac{\Delta\al}{4\pi\e}\sideset{}{_{|\al_k|\le \pi}}\sum \sideset{}{_j}\sum\CH \ik^{\prime}\left(\frac{\vec\al_k\cdot x-p_j}\e\right)\eta_{k,j}.
\end{split}
\ee 
Since $\ik$ is compactly supported, we see from \eqref{fN recons} that the resolution of the reconstruction is, roughly, of order $\sim\e$, i.e. of the same order as the data step-size. The asymptotic behaviour of $f^{\text{rec}}_{\e}(x)$ as the data step-size becomes vanishingly small, i.e., $\lim_{\e\to 0} f^{\text{rec}}_{\e}(x)$ is well-understood from the theory of local reconstruction analysis (LRA), see e.g. \cite{Katsevich2017a, kat19a, Katsevich2020a, Katsevich2020b, Katsevich2021a}.  In the spirit of LRA, we seek to approximate $N_\e(x_0+\e\check x)$, where $x_0$ is fixed, $\chx$ is restricted to a bounded set, and
\be
N_\e^{\text{rec}}(x_0+\e\chx)=-\frac{\Delta\al}{4\pi\e}\sum_{j,k} \Hc\ik^{\prime}\big(a_k-j+\vec{\alpha}_k\cdot \chx \big)\eta_{k,j},\ a_k:=(\Ab_k\cdot x_0-\bar p)/\e.\label{recon}
\ee
To this end, we first establish that 
\be\label{GRF}
N^{\text{rec}}(\chx):=\lim_{\e\to0} \Ner(x_0+\e\chx)
\ee
is a Gaussian random variable for any fixed $\chx $. We will generalize this result further to conclude that as $\chx $ varies in a \textit{neighborhood} of a generic $x_0$ it gives rise to a Gaussian random field (GRF). By a slight abuse of notation, the latter is also denoted by $N^{\text{rec}}(\chx)$.  

Now we state a key technical assumption on the center of any neighborhood of $x_0$ that is needed later to state our main theorems. Let $\langle r\rangle$ denote the distance from a real number $r\in\br$ to the integers, $\langle r\rangle:=\text{dist}(r,\mathbb Z)$. The following definition is in \cite[p. 121]{KN_06} (after a slight modification in the spirit of \cite[p. 172]{Naito2004}).

\begin{definition} Let $\nu>0$. The irrational number $s$ is said to be of type $\nu$ if for any $\nu_1>\nu$, there exists $c(s,\nu_1)>0$ such that
\be\label{type ineq}
m^{\nu_1}\langle ms\rangle \ge c(s,\nu_1) \text{ for any } m\in\mathbb N.
\ee
\end{definition}
See also \cite{Naito2004}, where the numbers which satisfy \eqref{type ineq} are called $(\nu-1)$-order Roth numbers. It is known that $\nu\ge1$ for any irrational $s$. The set of irrationals of each type $\nu \ge 1$ is of full measure in the Lebesgue sense \cite{Naito2004}.

\begin{assumption}{(Assumptions on the center of a neighborhood {of} $x_0$)}\label{ass:x0}
\begin{enumerate}
    \item The quantity $\kappa|x_0|$ is irrational and of some finite type $\nu$.
    \item $\sigma^2(\al,\vec\al\cdot x_0)\not=0$ for all $\al$ in some open set $\Omega\subset[0,2\pi]$.
\end{enumerate}
\end{assumption}

\noindent
Now we are ready to state the main theorems proved in this work.
\begin{theorem}\label{lem:Lyapunov}
Let $x_0,\chx \in\Rb^2$ be two fixed points. Suppose the random variables $\eta_{k,j}$ satisfy Assumption \ref{noi}, the kernel $\ik$ satisfies Assumption \ref{interp} with $M>\nu+1$, and the point $x_0$ satisfies Assumption~\ref{ass:x0}. One has
\begin{align}\label{main lim}
\frac{\sum_{j,k}\lvert \Hc \ik^{\prime}(a_k-j,\Ab_k\cdot \chx)\rvert^3\Eb\lvert \eta_{j,k}\rvert^3}{\big[\sum_{k,j}\big(\Hc \ik^{\prime}(a_k-j,\Ab_k\cdot \chx)\big)^2\Eb \eta_{k,j}^2\big]^{\frac{3}{2}}}=O(\e^{1/2}),\ \e\to0.
\end{align}
\end{theorem}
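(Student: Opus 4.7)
The target bound is a Lyapunov-type ratio, set up to drive a later application of the Central Limit Theorem to $\Ner(x_0+\e\chx)$, which is a sum of independent mean-zero random variables indexed by $(k,j)$. My plan is to estimate the numerator from above and the denominator from below separately; the factor of $\e^{1/2}$ should emerge from the numerator, while the denominator will be shown to be bounded below by a strictly positive constant uniformly in $\e$.

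\textbf{Upper bound on the numerator.} Assumption \ref{noi}(3) gives $\Eb|\eta_{j,k}|^3\le c(\Delta\al)^{3/2}=O(\e^{3/2})$, so it suffices to prove that $\sum_{k,j}|\Hc\ik'(a_k-j+\Ab_k\cdot\chx)|^3=O(\e^{-1})$. Because $\ik$ is compactly supported with $\int\ik=1$ and $\int\ik'=0$, expanding $(t-s)^{-1}$ in powers of $s/t$ inside the principal-value integral defining $\Hc\ik'$ yields $\Hc\ik'(t)\sim -(\pi t^2)^{-1}$ as $|t|\to\infty$, so $|\Hc\ik'|^3$ is absolutely summable over any shifted integer grid, uniformly in the shift. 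Hence the inner sum over $j$ is $O(1)$ uniformly in $k$, and the number of admissible $k$ is $O(\Delta\al^{-1})=O(\e^{-1})$, giving the numerator bound $O(\e^{1/2})$.

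\textbf{Reduction of the denominator.} Invoking Assumption \ref{noi}(2), the denominator equals the $3/2$-power of
\beq
S_\e:=\Delta\al\sum_{k,j}(\Hc\ik'(a_k-j+\Ab_k\cdot\chx))^2\sigma^2(\al_k,p_j).
\eeq
I choose $\Omega'\ssubset\Omega$ on which $\sigma^2(\al,\Ab\cdot x_0)\ge c_0>0$, guaranteed by Assumption \ref{ass:x0}(2), and restrict to $\al_k\in\Omega'$. Since $(\Hc\ik')^2$ concentrates where $|a_k-j|$ is bounded, the corresponding $p_j=\bar p+j\e$ satisfies $p_j-\Ab_k\cdot x_0=O(\e)$, and by continuity of $\sigma^2$ we get $\sigma^2(\al_k,p_j)\ge c_0/2$ for small $\e$. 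Introducing the $1$-periodic function $F_k(a):=\sum_{j\in\Zb}(\Hc\ik'(a-j+\Ab_k\cdot\chx))^2$, whose mean equals $\|\ik'\|_{L^2}^2>0$ by Plancherel, the problem reduces to showing $\Delta\al\sum_{\al_k\in\Omega'}F_k(a_k)\ge c>0$ uniformly in $\e$.

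\textbf{Main obstacle: the equidistribution step.} This is the technical heart of the argument and the step I expect to be the most delicate. Substituting $a_k=(|x_0|\cos(k\kappa\e-\phi_0)-\bar p)/\e$, where $\phi_0$ is the polar angle of $x_0$, shows that $\{a_k\bmod 1\}$ is not a simple arithmetic progression, so equidistribution must be established via Fourier analysis. I would expand $F_k(a)=\sum_{n\in\Zb}c_n(\Ab_k\cdot\chx)e^{2\pi i n a}$, where $c_0=\|\ik'\|_{L^2}^2$ and $|c_n|\le c(1+|n|)^{-(M+1)}$ by Assumption \ref{interp}(2) (via Poisson summation applied to $(\Hc\ik')^2$). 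The $n=0$ term produces the desired limit $|\Omega'|\|\ik'\|_{L^2}^2>0$ through a standard Riemann-sum approximation in $\al_k$. For each $n\ne 0$, I would partition $\Omega'$ into subintervals on which $k\mapsto a_k$ is approximately linear with slope $-\kappa|x_0|\sin(\al_k-\phi_0)$, and apply a geometric-sum bound to obtain $|\sum_k e^{2\pi i n a_k}|\le C/\L n\kappa|x_0|\sin(\al_k-\phi_0)\R$. Assumption \ref{ass:x0}(1) (type-$\nu$), combined with a separate handling of $\al$ near the zeros of $\sin(\al-\phi_0)$, then yields $\L n\kappa|x_0|\sin(\al-\phi_0)\R\ge c\,n^{-\nu}$ off a small exceptional set. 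Summing over $n$ and using the decay $|c_n|\le c\,n^{-(M+1)}$, the requirement $M>\nu+1$ makes $\sum_{n\ne 0}|c_n|n^\nu$ absolutely convergent, so the oscillatory contributions vanish in the limit and $S_\e$ stays bounded below by a positive constant. The ratio is therefore $O(\e^{1/2})/O(1)=O(\e^{1/2})$, as asserted.
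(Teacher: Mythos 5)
Your overall architecture matches the paper's: bound the numerator by $O(\e^{1/2})$ using $\Eb|\eta_{k,j}|^3\le c(\Delta\al)^{3/2}$ together with uniform summability of $|\Hc\ik'|^3$ over the $j$-grid, and show the denominator stays bounded below by periodizing in $j$, expanding the resulting $1$-periodic function in a Fourier series, taking the mean as the main term, and killing the nonzero modes via equidistribution of $a_k$ modulo $1$. The numerator estimate and the reduction of the denominator are sound (the paper computes the exact limit \eqref{lim De}, but a lower bound suffices for the theorem, and your restriction to $\Omega'$ is legitimate).

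The gap is in the equidistribution step, exactly where you flag the difficulty. The type-$\nu$ hypothesis controls $\langle m\kappa|x_0|\rangle$ for \emph{integers} $m$ only; it says nothing about $\langle n\kappa|x_0|\sin(\al-\phi_0)\rangle$ for general $\al$. For fixed $n$, the slope $n\kappa|x_0|\sin(\al-\phi_0)$ sweeps an interval of length $\sim n$ as $\al$ varies, hence passes through $\sim n$ integers; at each crossing your Kusmin--Landau/geometric-sum denominator vanishes, so the bound $C/\langle\cdot\rangle$ fails on a union of $\sim n$ resonance intervals whose total contribution cannot be dismissed as ``a small exceptional set'' --- the trivial bound there, multiplied by $|c_n|$ and summed over $n$, does not close the argument. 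What is actually needed on each resonance interval $I_l=\{\al:\ |\kappa\phi^{\prime}(\al)-l|\le 1/2\}$ is to convert the sum to an oscillatory integral with the shifted phase $\fks(\al)=\phi(\al)-(l/\kappa)\al$ (Lemma~\ref{lem:sum1 est}) and estimate it by stationary phase, gaining $\e^{1/2}(\phi^{\prime\prime}(\al_l^*))^{-1/2}$ (Lemma~\ref{lem:int1 est}); these contributions are then summed over $l$ and $m$ (Lemma~\ref{lem:generic}). The Diophantine condition enters only at the extremum $\alst$ of $\phi^{\prime}$, i.e.\ where $|\sin(\al-\phi_0)|=1$ (not at its zeros, as you suggest): there $\kappa\phi^{\prime}(\alst)=\pm m\kappa|x_0|$, the stationary-phase gain degenerates, and one needs $\langle m\kappa|x_0|\rangle\ge c|m|^{-\nu_1}$ to control the small denominator on the exceptional set $I_\star$ (Lemma~\ref{lem:exc cases}); this is what forces $M>\nu+1$. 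Without this resonance decomposition and the stationary-phase lemma your argument does not go through, and the intermediate claim $\langle n\kappa|x_0|\sin(\al-\phi_0)\rangle\ge c\,n^{-\nu}$ off a small set is false for most $\al$.
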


\begin{corollary}\label{cor:var}
The family of random variables ${N}_\e^{\text{rec}}(\chx)$, $\e>0$, satisfies the Lyapunov  condition for triangular arrays \cite [Definition 11.1.3]{ath_book}. By \cite[Corollary 11.1.4]{ath_book}, $N^{\text{rec}}(\chx):=\lim_{\e\to0}{N}_\e^{\text{rec}}(\chx)$ is a Gaussian random variable, where the limit is in the sense of convergence in distribution.
\end{corollary}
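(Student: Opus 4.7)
The plan is to apply the Lyapunov central limit theorem for triangular arrays, using Theorem \ref{lem:Lyapunov} to verify its key hypothesis. First, I would identify the triangular array: for each fixed $\e>0$, set
\[
X_{\e,k,j} := -\frac{\Delta\al}{4\pi\e}\,\Hc\ik'(a_k-j+\vec\al_k\cdot\chx)\,\eta_{k,j},
\]
where $(k,j)$ ranges over the finite set of indices on which the kernel is nonzero (ensured by compact support of $\ik$, Assumption \ref{interp}(1)). By Assumption \ref{noi}(1) and independence of the $\eta_{k,j}$, the $X_{\e,k,j}$ are independent with zero mean, and by \eqref{recon} their sum equals $N_\e^{\text{rec}}(x_0+\e\chx)$.

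Next I would set up the normalization. Let
\[
s_\e^2 := \sum_{k,j} \Eb X_{\e,k,j}^2 = \Big(\frac{\Delta\al}{4\pi\e}\Big)^2 \sum_{k,j} \big(\Hc\ik'(a_k-j,\vec\al_k\cdot\chx)\big)^2 \Eb \eta_{k,j}^2.
\]
The Lyapunov condition with $\delta=1$ requires $\sum_{k,j}\Eb|X_{\e,k,j}|^3 / s_\e^3 \to 0$ as $\e\to 0$. Substituting the definition of $X_{\e,k,j}$, the common prefactor $(\Delta\al/(4\pi\e))^3$ cancels between numerator and denominator, leaving precisely the ratio that appears on the left-hand side of \eqref{main lim}. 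Theorem \ref{lem:Lyapunov} therefore gives that this ratio is $O(\e^{1/2})\to 0$, and the Lyapunov condition is verified.

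Having discharged the hypothesis, I would invoke \cite[Corollary 11.1.4]{ath_book}: the normalized sum $N_\e^{\text{rec}}(x_0+\e\chx)/s_\e$ converges in distribution to a standard Gaussian. To convert this into the statement of Corollary \ref{cor:var}, one needs $s_\e^2$ itself to converge to a finite limit $\sigma_{\chx}^2$; Slutsky's theorem then yields $N_\e^{\text{rec}}(x_0+\e\chx)\to N(0,\sigma_{\chx}^2)$ in distribution, which is the asserted Gaussianity of $N^{\text{rec}}(\chx)$. Convergence of $s_\e^2$ is a Riemann-sum-type calculation that uses continuity of $\sigma^2$ (Assumption \ref{noi}(2)) together with equidistribution of the fractional parts of $\{a_k\}$ under Assumption \ref{ass:x0}(1); Assumption \ref{ass:x0}(2) then ensures $\sigma_{\chx}^2>0$ so the limit is non-degenerate. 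This calculation is naturally packaged with the explicit covariance formula stated among the main results.

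The main obstacle is not in the corollary itself but in Theorem \ref{lem:Lyapunov}, which here is used as a black box. Once that bound is granted, verification of independence and zero-mean is immediate, the cancellation of normalization factors in the Lyapunov ratio is an algebraic check, and the upgrade from standard Gaussian limit of the normalized sum to Gaussian limit of $N_\e^{\text{rec}}(x_0+\e\chx)$ itself requires only convergence of the variance, which is of a classical Riemann-sum flavor under the equidistribution hypothesis on $\kappa|x_0|$.
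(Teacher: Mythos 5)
Your proposal is correct and follows essentially the same route as the paper: Theorem~\ref{lem:Lyapunov} is exactly the Lyapunov ratio (with $\delta=1$) after the common prefactor $\bigl(\Delta\al/(4\pi\e)\bigr)^3$ cancels, and the convergence of the variance $s_\e^2$ to a positive limit—needed for your Slutsky step—is precisely what the paper establishes via \eqref{lim De} and \eqref{A.6} in the proof of Theorem~\ref{lem:Lyapunov}. Your write-up is in fact slightly more explicit than the paper's (which states the corollary without separating out the normalization/Slutsky step), but the substance is identical.
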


Our next theorem shows that if we consider the reconstruction $N^{\text{rec}}_{\e}(x)$ at any finite number of fixed points in a neighborhood of some chosen point $x_0$, then, in the limit as $\e\to0$, the reconstruction is a Gaussian random vector. More precisely, let us select any $K$ distinct points $\chx_i\in\br^2$, $i=1,\dots,K$. The corresponding  reconstruction vector is ${\mathbf{N}}_\e^{\text{rec}}:=({N}_\e^{\text{rec}}(x_0+\e\chx_1),\dots,{N}_\e^{\text{rec}}(x_0+\e\chx_K))\in\br^K$. Pick any vector $\vec\theta\in \mathbb R^K$. By \eqref{recon}
\begin{align}\label{recon dttpr 1}
\xi_\e:=\vec\theta\cdot \mathbf{N}_\e^{\text{rec}} = \sum_{i=1}^K \theta_i\sum_{j,k} \Hc\ik^{\prime}\big(a_k-j+\vec{\alpha}_k\cdot \chx_i\big)\eta_{k,j}.
\end{align}
The next theorem generalizes Theorem \ref{lem:Lyapunov} above.

\begin{theorem}\label{thm:Lyapunov2} 
Pick any $\vec\theta\in \Rb^K$, $\vec\theta\not=0$, and let $\xi_{\e}$ be defined as in \eqref{recon dttpr 1}. Suppose the random variables $\eta_{k,j}$ satisfy Assumption \ref{noi}, the kernel $\ik$ satisfies Assumption \ref{interp} with $M>\nu+1$, and the point $x_0$ satisfies Assumption~\ref{ass:x0}. One has:
\begin{align}\label{eq:6.7}
\frac{\sum_{j,k}\lvert \sum_{i=1}^K \theta_i\Hc \ik^{\prime}(a_k-j,\Ab_k\cdot \chx_i)\rvert^3\Eb\lvert \eta_{j,k}\rvert^3}{\big[\sum_{k,j}\big(\sum_{i=1}^K \theta_i\Hc \ik^{\prime}(a_k-j,\Ab_k\cdot \chx_i)\big)^2\Eb \eta_{k,j}^2\big]^{\frac{3}{2}}}=O(\e^{1/2}),\ \e\to0.
\end{align}
\end{theorem}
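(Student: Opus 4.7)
The plan is to reduce Theorem \ref{thm:Lyapunov2} to Theorem \ref{lem:Lyapunov} for the numerator, and to produce an $\e$-uniform positive lower bound for the denominator. Throughout, write $A_{k,j,i}:=\Hc\ik^{\prime}(a_k-j,\Ab_k\cdot\chx_i)$, so the ratio in \eqref{eq:6.7} takes the form
\[
\frac{\sum_{k,j}\big|\sum_i\theta_i A_{k,j,i}\big|^3 \Eb|\eta_{k,j}|^3}{\big(\vec\theta^{\,T} M_\e\,\vec\theta\big)^{3/2}}, \qquad (M_\e)_{i,i'}:=\sum_{k,j} A_{k,j,i}A_{k,j,i'}\Eb \eta_{k,j}^2.
\]

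For the numerator, I would apply the elementary inequality $\big|\sum_i\theta_i a_i\big|^3\le K^2\sum_i|\theta_i|^3|a_i|^3$, which reduces the numerator in \eqref{eq:6.7} to a sum of $K$ numerators of the single-point form treated in Theorem \ref{lem:Lyapunov}. The proof of that theorem already controls each of those single-point numerators by $O(\e^{1/2})$ (since the corresponding single-point denominator is shown to converge to a positive constant). Hence the numerator in \eqref{eq:6.7} is $O(\e^{1/2})$.

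The substantive obstacle is the lower bound for the denominator, i.e. proving $\vec\theta^{\,T}M_\e\,\vec\theta\ge c>0$ for all sufficiently small $\e$ and all $\vec\theta\ne 0$ with $|\vec\theta|=1$. I would first show, entrywise, $M_\e\to M_\infty$ as $\e\to 0$, using the same Weyl/Koksma equidistribution machinery that drives the variance asymptotic behind Theorem \ref{lem:Lyapunov}: Assumption \ref{ass:x0}(1) (the finite type $\nu$ of $\kappa|x_0|$), the smoothness of $\sigma^2$, and the regularity of $\ik$ under Assumption \ref{interp} with $M>\nu+1$, together suffice to approximate the Riemann sum in the $j$-variable and the oscillatory sum in the $k$-variable with controlled error. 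Then I need $M_\infty$ to be positive definite. I would observe that $\vec\theta^{\,T} M_\infty\,\vec\theta$ is the variance of the (scalar) Gaussian limit $\sum_i\theta_i N^{\text{rec}}(\chx_i)$ coming from Corollary \ref{cor:var}; so positive definiteness amounts to ruling out an a.s.\ vanishing nontrivial linear combination. Using Assumption \ref{ass:x0}(2), the integral representation of $M_\infty$ restricts to an open set of directions $\al$ where $\sigma^2(\al,\vec\al\cdot x_0)>0$; on this set $\vec\al\cdot(\chx_i-\chx_{i'})\ne 0$ except on a set of measure zero, and since $\Hc\ik^{\prime}$ is not identically zero, the kernels $\chx\mapsto \Hc\ik^{\prime}(\,\cdot\,,\vec\al\cdot\chx)$ at distinct $\chx_i$ remain linearly independent after integration in $p$ and $\al$, forcing $\vec\theta=0$.

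Combining these, the denominator in \eqref{eq:6.7} is bounded below by $(c|\vec\theta|^2)^{3/2}(1+o(1))$ while the numerator is $O(\e^{1/2})$, yielding the claim. The main difficulty I anticipate is the positive-definiteness step; a cleaner route, if available, is to first establish the explicit integral form of the limiting covariance (this is presumably done later in the paper when $N^{\text{rec}}$ is identified as a GRF), from which non-degeneracy for distinct $\chx_i$ can be read off directly, bypassing the ad hoc injectivity argument.
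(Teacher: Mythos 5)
Your proposal follows essentially the same route as the paper: split the ratio into a numerator bounded by $O(\e^{1/2})$ via Assumption~\ref{noi}(3) and the uniform boundedness of sums like \eqref{Psi def}--\eqref{A.7} (your reduction via $|\sum_i\theta_i a_i|^3\le K^2\sum_i|\theta_i|^3|a_i|^3$ is a fine way to do this), and a denominator $\Eb\xi_\e^2$ shown to converge to the integral $\int_0^{2\pi}\sigma^2(\al,\vec\al\cdot x_0)\int_\br f^2(r,\al)\,\dd r\,\dd\al$ with $f(r,\al)=\sum_i\theta_i\Hc\ik'(r+\vec\al\cdot\chx_i)$, which must then be shown to be nonzero. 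The one substantive step you leave as an assertion is exactly the crux: that the shifted kernels $\Hc\ik'(\cdot+\vec\al\cdot\chx_i)$ are linearly independent, i.e.\ that $f\equiv0$ on $\br\times\Omega$ forces $\vec\theta=0$. Your phrase ``remain linearly independent after integration in $p$ and $\al$'' is not an argument, and note that $\Hc\ik'$ is \emph{not} compactly supported, so the naive ``look at the edge of the support'' reasoning does not apply to it directly. The paper closes this as follows: $\Hc\ik'(\cdot)$ extends analytically off the (compact) support of $\ik'$, so $f(\cdot,\al)\equiv0$ plus analytic continuation and the Sokhotski--Plemelj jump formula yield $\sum_i\theta_i\ik'(p+\vec\al\cdot\chx_i)\equiv0$ for $\al\in\Omega$; then one picks $\al\in\Omega$ (using that $\Omega$ is open) with all projections $\vec\al\cdot\chx_i$ distinct, and compact support of $\ik'$ gives $\theta_i=0$. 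An alternative that would also work for your version is a Fourier-transform argument ($\widehat{\Hc\ik'}(\xi)=-i\,\mathrm{sgn}(\xi)\,i\xi\tilde\ik(\xi)$ is nonzero near $\xi=0$ since $\tilde\ik(0)=1$, and $\sum_i\theta_ie^{i s_i\xi}$ vanishing on a set of positive measure forces $\vec\theta=0$ when the $s_i$ are distinct), but some such argument must be supplied; as written, the proposal assumes the conclusion at its only non-routine point.
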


\begin{corollary}\label{cor:covar}
From \cite [Corollary 11.1.4]{ath_book}, it follows that $\lim_{\e\to0} \xi_{\e}$ is a Gaussian random variable. Hence, by \cite [Theorem 10.4.5]{ath_book} $\lim_{\e\to0}{ \mathbf{N}}_\e^{\text{rec}}$ is a Gaussian random vector, where as before, the limit is in the sense of convergence in distribution. 
\end{corollary}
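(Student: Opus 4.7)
The plan is to deduce Corollary~\ref{cor:covar} from Theorem~\ref{thm:Lyapunov2} in three steps: (i) package $\xi_\e$ as a triangular array and invoke Lyapunov's CLT to obtain asymptotic normality of $\xi_\e/s_\e$; (ii) verify convergence of the normalizing variance $s_\e^2$ to a finite limit $s^2(\vec\theta)$; and (iii) apply the Cramér-Wold device to lift one-dimensional convergence to joint convergence of $\mathbf{N}_\e^{\text{rec}}$. Step (i) is essentially mechanical: write $\xi_\e=\sum_{k,j}X_{k,j}^\e$ with
\[
X_{k,j}^\e := -\frac{\Delta\al}{4\pi\e}\bigg(\sum_{i=1}^K \theta_i \,\Hc\ik'\big(a_k - j + \vec\al_k\cdot\chx_i\big)\bigg)\eta_{k,j}.
\]
By Assumption~\ref{noi}, the $X_{k,j}^\e$ are independent, mean zero, and have finite third moments. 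The common prefactor $|\Delta\al/(4\pi\e)|^3$ cancels between numerator and denominator of the Lyapunov ratio $\sum_{k,j}\Eb|X_{k,j}^\e|^3 \,/\,(\sum_{k,j}\Eb (X_{k,j}^\e)^2)^{3/2}$, so that ratio is exactly the left-hand side of \eqref{eq:6.7}. Theorem~\ref{thm:Lyapunov2} gives it as $O(\e^{1/2})\to 0$, and Corollary~11.1.4 of Athreya yields $\xi_\e/s_\e \Rightarrow \mathcal N(0,1)$, where $s_\e^2 := \mathrm{Var}(\xi_\e)$.

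Step (ii) is the analytic crux that the Lyapunov bound alone does not deliver. By bilinearity together with independence and $\Eb \eta_{k,j}^2 = \sigma^2(\al_k,p_j)\Delta\al$, one has $s_\e^2 = \sum_{i,i'=1}^K \theta_i\theta_{i'}\, C_\e(\chx_i,\chx_{i'};x_0)$, where $C_\e(\chx,\chx';x_0)$ is an explicit double sum in $(k,j)$ weighted by products of $\Hc\ik'$ at the shifted arguments and by $\sigma^2$. To conclude $s_\e^2 \to s^2(\vec\theta) \in [0,\infty)$ one must show $C_\e(\chx,\chx';x_0) \to C(\chx,\chx';x_0)$. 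The $j$-sum is a Riemann sum and is straightforward once integrability is in place; the $k$-sum, by contrast, is a sum of a smooth periodic function evaluated at $\{a_k\}\bmod 1$, and its convergence to an integral requires quantitative equidistribution of this sequence. Here Assumption~\ref{ass:x0}(1) enters decisively: the type-$\nu$ irrationality of $\kappa|x_0|$ furnishes a Koksma-style discrepancy bound, while Assumption~\ref{interp} with $M>\nu+1$ supplies enough smoothness and decay of $\Hc\ik'$ to beat this discrepancy. (This is essentially the same mechanism that underlies the existence of the limit \eqref{noise rec 0}, and can be expected to be a by-product of its proof.) Slutsky's lemma then upgrades $\xi_\e/s_\e \Rightarrow \mathcal N(0,1)$ to $\xi_\e \Rightarrow \mathcal N(0, s^2(\vec\theta))$, which is (possibly degenerate) Gaussian.

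Step (iii) is purely formal: since $\vec\theta \in \mathbb R^K$ was arbitrary, Theorem~10.4.5 of Athreya (Cramér-Wold) lifts this to joint convergence of $\mathbf{N}_\e^{\text{rec}}$ in distribution to a Gaussian random vector in $\mathbb R^K$, with mean zero and covariance matrix $\Sigma = \big(C(\chx_i,\chx_{i'};x_0)\big)_{i,i'}$ recovered from $\vec\theta^{\top}\Sigma\vec\theta = s^2(\vec\theta)$ by polarization. The main obstacle is thus step (ii), where the number-theoretic structure of $x_0$ and the smoothness budget on $\ik$ genuinely enter; the rest is a standard triangular-array CLT followed by Cramér-Wold.
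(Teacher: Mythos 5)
Your proposal is correct and follows essentially the same route as the paper: the Lyapunov condition from Theorem~\ref{thm:Lyapunov2} feeds into the triangular-array CLT (\cite[Corollary 11.1.4]{ath_book}), the convergence of $\mathrm{Var}(\xi_\e)$ is obtained from the same equidistribution machinery (the paper does this in \eqref{recon dttpr 2}--\eqref{recon dttpr 3} via Appendix~\ref{approx_step} and Section~\ref{sec: 2nd DTB}), and Cram\'er--Wold (\cite[Theorem 10.4.5]{ath_book}) lifts the result to the vector $\mathbf{N}_\e^{\text{rec}}$. The only cosmetic difference is that you allow a possibly degenerate limit variance, whereas the paper proves strict positivity of $\lim_{\e\to0}\Eb\xi_\e^2$ for $\vec\theta\neq 0$ via analytic continuation and the Sokhotski--Plemelj formulas; either way the limiting law is Gaussian, so your argument is sound.
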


Let $D\subset\br^2$ be a domain. Recall that $G(x)$, $x\in D$, is a Gaussian random field (GRF) if $(G(x_1),\cdots,G(x_K))$ is a Gaussian random vector for any $K\ge1$ and any collection of points $x_1,\cdots,x_K\in D$ \cite[Section 1.7]{AdlerGeomRF2010}. As is known, a GRF is completely characterized by its mean function $m(x)=\Eb G(x)$, $x\in D$ and its covariance function $\text{Cov}(x,y)=\Eb (G(x)-m(x))(G(y)-m(y))$, $x,y\in D$ \cite[Section 1.7]{AdlerGeomRF2010}. Thus, Corollary~\ref{cor:covar} implies that $N^{\text{rec}}(\chx)$ is a GRF.

Let $D:=[A_1,A_2]\times[B_1,B_2]$ be a rectangle. In the next theorem, we show that $N_\e^{\text{rec}}(x_0+\e\check x)\to N^{\text{rec}}(\chx)$, $\chx \in D$, as $\e\to0$ weakly (\cite[p. 185]{Khoshnevisan2002}). Recall that $N^{\text{rec}}(\chx)$, $\chx \in D$, denotes a GRF as well (i.e., not just a random variable). Given two compactly supported, real-valued continuous functions $f$ and $g$, their cross-correlation is defined as follows:
\be\label{cross-cor}
(f\star g)(t):=\int_\br f(t+s)g(s)\dd s.
\ee

\begin{theorem}\label{GRF_thm}
Let $D$ be a rectangle. Suppose the random variables $\eta_{k,j}$ satisfy Assumption \ref{noi}, the kernel $\ik$ satisfies Assumption \ref{interp} with $M>\max(\nu+1,3)$, and the point $x_0$ satisfies Assumption~\ref{ass:x0}.
Then, $N_{\e}^{\text{rec}}(x_0+\e \chx)\to N^{\text{rec}}(\chx)$, $\chx\in D$, $\e\to0$, as GRFs in the sense of weak convergence. Furthermore, $N^{\text{rec}}(\chx)$ is a GRF with zero mean and covariance
\be\label{Cov main}
\text{Cov}(\chx,\chy)
=C(\chx-\chy):=\bigg(\frac{\kappa}{4\pi}\bigg)^2\int_{0}^{2\pi}\sigma^{2}(\alpha,\Ab\cdot x_0) (\phi^{\prime}\star \phi^{\prime})(\Ab\cdot (\chx-\chy))\dd\alpha,
\ee
and sample paths of $N^{\text{rec}}(\chx)$ are continuous with probability $1$.
\end{theorem}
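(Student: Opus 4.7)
The plan is to break the statement into three pieces: (a) convergence of all finite-dimensional marginals to those of a centered Gaussian field, (b) identification of the limiting covariance with \eqref{Cov main}, and (c) tightness in $C(D)$, which both upgrades (a) to weak convergence in $C(D)$ and forces sample-path continuity of the limit.

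For (a), Assumption \ref{noi}(1) immediately gives $\Eb \Ner(x) = 0$ for every $\e > 0$ and every $x$, so any weak limit is centered. For any finite collection $\chx_1,\dots,\chx_K\in D$, Corollary \ref{cor:covar} shows that $(\Ner(x_0+\e\chx_i))_{i=1}^{K}$ converges in distribution to a centered Gaussian vector; the Kolmogorov extension theorem then produces a centered Gaussian random field $N^{\text{rec}}$ on $D$ whose finite-dimensional distributions are exactly these limits.

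For (b), I would compute directly. By independence of the $\eta_{k,j}$ and Assumption \ref{noi}(2),
\beq
\text{Cov}_\e(\chx,\chy) = \bigg(\frac{\kappa}{4\pi}\bigg)^{\!2}\Delta\al \sum_{k,j}\sigma^{2}(\al_k,p_j)\,\Hc\ik'(a_k-j+\Ab_k\cdot\chx)\,\Hc\ik'(a_k-j+\Ab_k\cdot\chy).
\eeq
First I would freeze $\sigma^2$ at its value on the line $p=\Ab_k\cdot x_0$, controlling the error through the $C^1$ regularity of $\sigma^2$ and the decay of $\Hc\ik'$. The inner sum then becomes the $1$-periodic function
\beq
F_k(t)=\sum_{j\in\Zb}\Hc\ik'(t-j+\Ab_k\cdot\chx)\,\Hc\ik'(t-j+\Ab_k\cdot\chy)
\eeq
evaluated at $t=a_k$, whose mean over one period equals $(\Hc\ik'\star \Hc\ik')(\Ab_k\cdot(\chx-\chy))$. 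Since $\Hc^{*}=-\Hc$ and $\Hc^2=-I$ on $L^2(\br)$, one has $(\Hc\ik'\star\Hc\ik')(s)=(\ik'\star\ik')(s)$, which is the integrand in \eqref{Cov main}. To replace the Riemann-type sum in $k$ by the $\al$-integral, I would invoke a quantitative Weyl equidistribution bound (Koksma--Erdős--Turán) for $\{a_k\bmod 1\}_k$: the type-$\nu$ hypothesis in Assumption \ref{ass:x0}(1) controls the discrepancy, while $M>\nu+1$ in Assumption \ref{interp} supplies enough Fourier decay of $F_k$ to render the discrepancy bound summable. I expect this equidistribution step to be the main obstacle, since $\Hc\ik'$ is not compactly supported and one must combine its $O(1/t^2)$ decay at infinity with careful control of the $\al$-dependence inside $F_k$.

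For (c), by the Kolmogorov--Chentsov criterion for random fields on $D\subset\br^2$, tightness of $\{\Ner(x_0+\e\cdot)\}_{\e>0}$ in $C(D)$ follows from a uniform bound $\Eb|\Ner(x_0+\e\chx)-\Ner(x_0+\e\chy)|^{\alpha}\le K|\chx-\chy|^{2+\beta}$ with $\alpha>2$ and $\beta>0$. Writing the increment as a sum of independent mean-zero contributions with coefficients $c_{k,j}(\chx,\chy)=-\tfrac{\Delta\al}{4\pi\e}[\Hc\ik'(a_k-j+\Ab_k\cdot\chx)-\Hc\ik'(a_k-j+\Ab_k\cdot\chy)]$, the condition $M>\max(\nu+1,3)$ supplies $|\Hc\ik'(u)-\Hc\ik'(v)|\le C|u-v|$ together with quadratic decay of $\Hc\ik''$ at infinity, so that $\sup_{k,j}|c_{k,j}|\le C|\chx-\chy|$ and $\sum_{k,j}c_{k,j}^2\Eb\eta_{k,j}^2\le C|\chx-\chy|^2$. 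Rosenthal's inequality at exponent $3$ combined with Assumption \ref{noi}(3) then yields $\Eb|\Ner(x_0+\e\chx)-\Ner(x_0+\e\chy)|^3\le C|\chx-\chy|^3$ uniformly in $\e$, i.e.\ $\alpha=3$, $\beta=1$. Together with (a) this gives weak convergence $\Ner(x_0+\e\cdot)\Rightarrow N^{\text{rec}}$ in $C(D)$, and almost sure continuity of the sample paths of the limit is then automatic.
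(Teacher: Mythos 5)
Your proposal is essentially correct, and parts (a) and (b) follow the paper's own route: finite-dimensional convergence is exactly Corollary~\ref{cor:covar}, and the covariance identity \eqref{Cov main} is exactly the limit \eqref{recon dttpr 3} (specialized by polarization) together with the observation $\Hc\ik'\star\Hc\ik'=\ik'\star\ik'$, which is \eqref{int_simplify}. Note that you do not need to re-fight the equidistribution battle you flag as ``the main obstacle'': the passage from the Riemann sum in $k$ to the $\al$-integral under the type-$\nu$ hypothesis is precisely the content of Section~\ref{sec: 2nd DTB} (Lemmas~\ref{lem:sum1 est}--\ref{lem:generic}), already invoked in proving Theorem~\ref{thm:Lyapunov2}, so you may simply cite \eqref{recon dttpr 3}. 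Where you genuinely diverge is tightness. The paper takes the sets $\Gamma_\de=\{f:\Vert f\Vert_{W^{2,2}}^2\le 1/\de\}$, which are compact in $C(D)$ by the Rellich--Kondrachov embedding, bounds $\Eb\Vert N_\e^{\text{rec}}\Vert_{W^{2,2}}^2$ uniformly using only second moments of $\eta_{k,j}$ applied to $\pa_{\chx}^m N_\e^{\text{rec}}$, $|m|\le 2$, and concludes by Chebyshev; this is what consumes the hypothesis $M>3$ (one needs $\Hc\ik'''$ under control). You instead use the Kolmogorov--Chentsov moment criterion with $\alpha=3$, $\beta=1$ via Rosenthal's inequality, which is where Assumption~\ref{noi}(3) on third moments earns its keep: your estimates $\sup_{k,j}|c_{k,j}|\le C|\chx-\chy|$, $\sum_{k,j}c_{k,j}^2\Eb\eta_{k,j}^2\le C|\chx-\chy|^2$, and $\sum_{k,j}|c_{k,j}|^3\Eb|\eta_{k,j}|^3\le C(\Delta\al)^{1/2}|\chx-\chy|^3$ all check out (the $1/\e$ in the prefactor cancels against $\Delta\al=\kappa\e$), and in two dimensions the exponent $2+1$ suffices. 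Your route needs only one derivative of $\Hc\ik'$ rather than three, but requires third moments; the paper's route is purely $L^2$ but needs more kernel smoothness. Two small points to tidy up: Kolmogorov--Chentsov tightness in $C(D)$ also requires tightness of $\{N_\e^{\text{rec}}(x_0+\e\chx_0)\}_\e$ at one fixed point (immediate from the uniform variance bound, but say it), and the zero-mean claim for the limit field should be deduced from the Gaussian limit of the marginals (each $\Ner(x)$ has mean zero and uniformly bounded variance, so the limiting Gaussian is centered), not merely from $\Eb\Ner=0$ at fixed $\e$.
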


\section{Proofs of Theorems~\ref{lem:Lyapunov}--\ref{GRF_thm}}\label{sec:main proofs}

\subsection{Proof of Theorem \ref{lem:Lyapunov}}
Similarly to \cite{Katsevich_2024_BV}, we define 
\begin{equation}\label{psi_a_b}\begin{split}
\psi(a,b):=&\sum_j  \big[ \Hc \ik^{\prime}(a-j+b) \big]^2.
\end{split}
\end{equation} 
Since $\Hc \ik^{\prime}(t)=O(t^{-2})$, $t\to\infty$, the series above converges absolutely. 
It is easy to see that 
\begin{align}\label{Psi per}
\psi(a+1,b)=  \psi(a,b),\ \forall a,b\in\mathbb R.
\end{align}

We analyze the numerator and denominator in \eqref{main lim} separately. It is shown in Appendix~\ref{approx_step} that the denominator in \eqref{main lim} can be written as $d_\e^{3/2}$, where
\begin{equation}\label{Deps}
d_\e:=\Delta\alpha\sum_{|\al_k|\le \pi} \psi(a_k,\vec{\alpha}_k\cdot \chx)\sigma^2(\alpha_k,\vec\alpha_k\cdot x_0)+O(\e).
\end{equation}
The leading term in \eqref{Deps} is obtained by substituting $p_j=\vec\alpha_k\cdot x_0$ in the second argument of $\sigma^2(\al_k,p_j)=\Eb \eta_{k,j}^2$.

Next we want to evaluate the limit, $\lim_{\epsilon \to 0}d_\e$. Using arguments similar to \cite{Katsevich_2024_BV}, we prove in Section~\ref{sec: 2nd DTB} the following result
\begin{align}\label{lim De}
\lim_{\epsilon \to 0}d_\e&=\int_{0}^{2\pi}\sigma^2(\alpha,\vec\alpha\cdot x_0)\int_0^1 \psi(r,\Ab\cdot \chx) \dd r \dd\alpha.
\end{align} 
Here 
\begin{equation}\label{lim psi int}
\int_{0}^1 \psi(r,b)\dd r=\sum_{j} \int_{0}^1\left( \Hc \ik^{\prime}(r-j+b)\right)^2\dd r=\int_{\Rb}\left(\Hc \ik^{\prime}(r)\right)^2\dd r=:C>0.
\end{equation}
Using Parseval's theorem, we also have:
\begin{align}
    \int_{\Rb}\left(\Hc \ik^{\prime}(r)\right)^2\dd r =  (2\pi)^{-1}\int_{\Rb}\big| \lambda \tilde{\ik}(\lambda)\big|^2\dd\lambda,
\end{align}
where $\tilde{\ik}(\lambda)$ denotes the Fourier transform of $\ik$. Thus 
\begin{align}\label{A.6}
\lim_{\epsilon \to 0}d_\e=C\int_0^{2\pi} \sigma^2(\alpha,\vec\alpha\cdot x_0) \dd\alpha>0
\end{align}
due to assumption~\ref{ass:x0}(2). To study the numerator in \eqref{main lim}, we define similarly to \eqref{psi_a_b}
\be \label{Psi def}
\Psi(a,b):=\sum_j  \big| \Hc \ik^{\prime}(a-j+b)\big|^3.
\ee
Clearly, $|\Psi(a,b)|\le c<\infty$, $a,b\in\br$. The numerator in \eqref{main lim} is bounded by $c\e^{1/2}n_\e$, where
\begin{align}\label{A.7}
n_\e:= \Delta\alpha\sum_{\lvert\al_k\rvert\leq \pi}\Psi\bigl(a_k,\Ab_k\cdot \check x\bigr).
\end{align}
Hence $n_\e\le c<\infty$ for all $0<\e<1$. Combining this with eq. \eqref{A.6} proves the theorem.

\subsection{Proof of Theorem \ref{thm:Lyapunov2} and Corollary~\ref{cor:var}}
{To show that ${\mathbf{N}}_\e^{\text{rec}}$ converges in distribution to a Gaussian random vector, it suffices to show that for any $0\neq \vec\theta\in \Rb^K$, $\lim_{\e\to0}\xi_{\e}:=\lim_{\e\to 0}\vec\theta\cdot \mathbf{N}_\e^{\text{rec}}$ is a Gaussian random variable \cite [Theorem 10.4.5]{ath_book}. Thus if we establish \eqref{eq:6.7}, then from Lyapunov's CLT, we will have shown $\xi_{\e}$ converges (in distribution) to a Gaussian random variable and consequently, ${\mathbf{N}}_\e^{\text{rec}}$ converges to a Gaussian random vector as $\e\to0$. The proof of this claim is similar to that of Theorem~\ref{lem:Lyapunov}, so here we only highlight the key points.

First we show that the denominator in \eqref{eq:6.7} converges to a positive number. 
Similarly to \eqref{Deps}, we show in Appendix \ref{approx_step}} that
\begin{equation}\label{recon dttpr 2}
\begin{split}
\Eb\xi_\e^2=\Delta\alpha\sum_{i_1=1}^K\sum_{i_2=1}^K  \theta_{i_1}\theta_{i_2}\sum_{j,k}& \Hc\ik^{\prime}\big(a_k-j+\vec{\alpha}_k\cdot \chx_{i_1}\big)\Hc\ik^{\prime}\big(a_k-j+\vec{\alpha}_k\cdot \chx_{i_2}\big)\\
&\times \sigma^2(\alpha_k,\vec\alpha_k\cdot x_0) + O(\e).
\end{split}
\end{equation} 
Therefore, using the same arguments as in the proof of Theorem~\ref{lem:Lyapunov}, we obtain
\begin{equation}\label{recon dttpr 3}
\begin{split}
\lim_{\e\to0}\Eb\xi_\e^2=&\int_0^{2\pi}\int_\br \sum_{i_1=1}^K\sum_{i_2=1}^K  \theta_{i_1}\theta_{i_2}\Hc\ik^{\prime}(r+\vec{\alpha}\cdot \chx_{i_1})\\
&\times \Hc\ik^{\prime}(r+\vec{\alpha}\cdot \chx_{i_2})\sigma^2(\alpha,\vec\alpha\cdot x_0)\dd r\dd\alpha
=\int_0^{2\pi}\sigma^2(\alpha,\vec\alpha\cdot x_0)\int_\br f^2(r,\alpha)\dd r\dd\alpha,\\
f(r,\alpha):=&\sum_{i=1}^K  \theta_i\Hc\ik^{\prime}(r+\vec{\alpha}\cdot \chx_i).
\end{split}
\end{equation} 

Suppose the limit in \eqref{recon dttpr 3} is zero. Clearly, $f(r,\al)$ is analytic in $r\in\mathbb C$ outside a compact subset of the real line for any $\al$. By assumption~\ref{ass:x0}(2), $f(r,\alpha)\equiv0$, $r\in\br$, $\al\in\Omega$. By analytic continuation and the Sokhotski–Plemelj formulas \cite[Chapter 1, section 4.2]{gakhov}, 
\be\label{zero sum}
\sum_{i=1}^K  \theta_i\ik^{\prime}(p+\vec{\alpha}\cdot \chx_i)\equiv0,\ p\in\br,\al\in\Omega.
\ee
Recall that all $\chx_i$ are distinct. Since $\Omega$ is an open set, we can find $\alpha\in\Omega$ such that $\vec\alpha \cdot \chx_{i_1}\not=\vec\alpha \cdot \chx_{i_2}$, $i_1,i_2=1,2,\dots,K$, $i_1\not=i_2$. This can be done by finding a plane $\vec\al^\perp$ through the origin that does not contain any of the vectors $\chx_{i_1}-\chx_{i_2}$, $i_1,i_2=1,2,\dots,K$, $i_1\not=i_2$. Together with \eqref{zero sum} this easily implies that all $\theta_i$ are zero. Since we assumed that $\vec\theta\not=0$, this contradiction proves that the limit in \eqref{recon dttpr 3} is not zero.

Finally, we analyze the numerator in \eqref{eq:6.7}. Using Assumption \ref{noi}(3) and arguing similarly to \eqref{Psi def}, \eqref{A.7}, we obtain that the numerator is bounded above as,  $\Eb\lvert\xi_\e\rvert^3\leq c\epsilon^{1/2}$. This finishes the proof.

\subsection{Proof of Theorem \ref{GRF_thm}}
Define $C:=C(D,\br)$ to be the collection of all continuous functions $f:D\to\br$ metrized by
\be\label{Cmetric}
d(f,g)=\sup_{\chx\in D}|f(\chx)-g(\chx)|,\ f,g\in C.
\ee
Our goal is to show that $\Ner(x_0+\e\chx)$, $\chx\in D$, converges to $N^{\text{rec}}(\chx)$, $\chx\in D$, in distribution as $C$-valued random variables. We use the following definition and theorem.

\begin{definition}[{\cite[p. 189]{Khoshnevisan2002}}]\label{def:kh}
Let $\Pb_n$ be the distribution of a $C$-valued random variable $X_n$, $1\leq n\leq \infty$. The collection $(\Pb_n)$ is tight if for all $\de\in(0,1)$, there exists a compact set $\Gamma_\de\in C$ such that $\sup_n \Pb(X_n\not\in\Gamma_\de)\le \de$. 
\end{definition}

\begin{theorem}[{\cite[Proposition 3.3.1]{Khoshnevisan2002}}]\label{kh_thm}
Suppose $X_n,\ 1\leq n\leq \infty$, are $C$-valued random variables. Then $X_n\to X_{\infty}$ weakly (i.e., the distribution of $X_n$ converges to that of $X_\infty$, see {\cite[p. 185]{Khoshnevisan2002}}) provided that:
\begin{enumerate}
\item Finite dimensional distributions of $X_n$ converge to that of $X_{\infty}$.
\item $(X_n)$ is a tight sequence.
\end{enumerate}
\end{theorem}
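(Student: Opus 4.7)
The plan is the classical two-step strategy behind results such as Billingsley's functional CLT: use Prokhorov's theorem to upgrade tightness into sequential relative compactness of laws on $C$, use hypothesis~(1) to identify every subsequential weak limit as $X_\infty$, and conclude via the standard subsequence principle that the full sequence converges weakly to $X_\infty$.

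First I would record that $C=C(D,\br)$ with the metric \eqref{Cmetric} is a Polish space. Completeness is the usual statement that a uniform limit of continuous functions on the compact rectangle $D$ is continuous, while separability follows from the Stone--Weierstrass theorem (e.g., polynomials in two variables with rational coefficients form a countable dense subset). On a Polish space, Prokhorov's theorem is available: tightness of a family of Borel probability measures is equivalent to relative compactness in the topology of weak convergence. Now fix an arbitrary subsequence $(X_{n_k})$ of $(X_n)$. By hypothesis~(2) the corresponding laws $\Pb_{n_k}$ are tight, so Prokhorov yields a further sub-subsequence $(X_{n_{k_\ell}})$ whose laws converge weakly to some Borel probability measure $\mathbb Q$ on $C$; write $Y$ for a $C$-valued random element with law $\mathbb Q$. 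For any finite collection $\chx_1,\dots,\chx_K\in D$ the evaluation map $\pi\colon C\to\br^K$, $f\mapsto(f(\chx_1),\dots,f(\chx_K))$, is $d$-continuous; the continuous mapping theorem applied to the convergent sub-subsequence therefore gives $\pi(X_{n_{k_\ell}})\Rightarrow \pi(Y)$, while hypothesis~(1) simultaneously gives $\pi(X_{n_{k_\ell}})\Rightarrow \pi(X_\infty)$. Uniqueness of weak limits in $\br^K$ then forces $Y$ and $X_\infty$ to share all finite-dimensional distributions.

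It remains to conclude that $\mathbb Q$ is the law of $X_\infty$, i.e., that Borel probability measures on the Polish space $C$ are determined by their finite-dimensional distributions. I would pick a countable dense subset $\{\chy_i\}\subset D$ and observe that for any $f\in C$ and $r>0$ the open sup-ball admits the representation $B(f,r)=\{g\in C:\sup_i|g(\chy_i)-f(\chy_i)|<r\}$, using continuity of $g-f$ on the compact set $D$ and density of $\{\chy_i\}$; writing this as $\bigcup_n\bigcap_i\{g:|g(\chy_i)-f(\chy_i)|\le r-1/n\}$ exhibits $B(f,r)$ as a countable Boolean combination of one-dimensional cylinder sets. Since such balls generate the Borel $\sigma$-algebra on the separable space $C$, the cylinder $\sigma$-algebra coincides with the Borel $\sigma$-algebra, and a Dynkin $\pi$--$\lambda$ argument shows that two Borel probability measures agreeing on the $\pi$-system of finite-dimensional cylinder sets agree on all Borel sets. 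Hence $Y$ and $X_\infty$ have the same law, and since every subsequence of $(X_n)$ admits a further subsequence converging weakly to $X_\infty$, the entire sequence does so as well.

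The main obstacle I anticipate is this final identification step: justifying that finite-dimensional distributions determine Borel probability measures on $C$. This depends crucially on separability of both $C$ and $D$, since only then does the cylinder $\sigma$-algebra generated by countably many evaluations at a dense subset recover the full Borel $\sigma$-algebra; in non-separable settings the two $\sigma$-algebras differ and the argument breaks down. Verifying Prokhorov's theorem is a standard appeal, and the continuous-mapping step is immediate, so the Polish structure of $C$ is the one piece of machinery doing real work in the proof.
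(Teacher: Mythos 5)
Your proof is correct, but note that the paper does not prove this statement at all: it is imported verbatim as \cite[Proposition 3.3.1]{Khoshnevisan2002}, so there is no in-paper argument to compare against. What you have written is the standard textbook proof (Prokhorov on the Polish space $C(D,\br)$, identification of subsequential limits via the evaluation maps and the fact that the cylinder $\sigma$-algebra coincides with the Borel $\sigma$-algebra, then the subsequence principle), which is essentially the argument given in the cited reference; all steps, including the $\pi$--$\lambda$ identification of laws from finite-dimensional distributions, are sound.
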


Corollary \ref{cor:covar} asserts that all finite-dimension distributions of $\Ner(x_0+\e\chx)$ converge to that of $N^{\text{rec}}(\chx)$. Thus what remains to be verified is Property 2 of Theorem \ref{kh_thm}. To this end, we
consider the sets 
\be\label{compact sets}
\Gamma_\de:=\{f\in C:\, \Vert f\Vert_{W^{2,2}(D^\circ)}^2 \le 1/\de\},
\ee
where $D^\circ$ is the interior of $D$. Recall that $W^{k,p}(D^\circ)$ is the closure of $C^\infty(D)$ in the norm:
\be\label{Wkp norm}
\Vert f\Vert_{k,p}:=\bigg(\int_{D}\sum_{|m|\le k}|\pa_x f(x)|^p\dd x\bigg)^{1/p},\ f\in C^\infty(D).
\ee

By \cite[eq. (7.8), p. 146 and Theorem 7.26, p. 171]{GilbTrud}, the imbedding $W^{2,2}(D^\circ)\hookrightarrow C(D)$ is compact. More precisely, we use here that the imbedding $W^{2,2}(D^\circ)\hookrightarrow W^{2,p}(D^\circ)$, $1\le p\le 2$ is continuous \cite[eq. (7.8), p. 146]{GilbTrud}, and the imbedding $W^{2,p}(D^\circ)\hookrightarrow C(D)$, $1\le p< 2$, is compact \cite[Theorem 7.26, p. 171]{GilbTrud}. Recall that $D$ is a rectangle, so its boundary is Lipschitz continuous. Hence the set $\Gamma_\de\subset C$ is compact for every $\de>0$. From \eqref{recon},
\be\label{recon deriv}
\pa_{\chx}^m N_\e^{\text{rec}}(x_0+\e\chx)=c\sum_{j,k}\vec\al_k^m \Hc\ik^{(|m|+1)}\big(a_k-j+\vec{\alpha}_k\cdot \chx \big)\eta_{k,j},\
m\in\N_0^2,|m|\le2.
\ee
Recall that $a_k$ are defined in \eqref{recon dttpr 2}. Therefore
\be\label{var der}
\begin{split}
\Eb(\pa_{\chx}^m N_\e^{\text{rec}}(x_0+\e\chx))^2=&c \Delta\al\sum_{j,k} \big[\vec\al_k^m\Hc\ik^{(|m|+1)}\big(a_k-j+\vec{\alpha}_k\cdot \chx \big)\big]^2\sigma^2(\al_k,p_j)\\
\le & c,\  \chx\in D.
\end{split}
\ee
This implies that $\Eb \Vert N_\e^{\text{rec}}(x_0+\e\chx))\Vert_{W^{2,2}(D)}^2\le c$ for all $\e>0$.
By the Chebyshev inequality,
\be\label{cheb}
\BP(N_\e^{\text{rec}}(x_0+\e\chx)\not\in\Gamma_\de)=\BP(\Vert N_\e^{\text{rec}}(x_0+\e\chx)\Vert_{W^{2,2}(D)}^2 \ge 1/\de)\le c\de.
\ee
Therefore $(X_n)$ is a tight sequence. 

By Theorem~\ref{kh_thm}, $\Ner(x_0+\e\chx)\to N^{\text{rec}}(\chx)$ in distribution as $C$-valued random variables. Since $C$ is a complete metric space, it follows that $N^{\text{rec}}(\chx)$ has continuous sample paths with probability 1.

By the linearity of the expectation, $N^{\text{rec}}(\chx)$ is a zero mean GRF. To completely characterize this GRF, we calculate its covariance function $\text{Cov}(\chx,\chy)=\Eb(N_\e^{\text{rec}}(\chx)N_\e^{\text{rec}}(\chy))$,  $\chx,\chy\in D$. In fact, essentially this has already been done in the proof of Theorem~\ref{thm:Lyapunov2}. From \eqref{recon} and \eqref{recon dttpr 3} we obtain
\begin{align}
\label{C_lm}
\begin{split}
\text{Cov}(\chx,\chy)&=\bigg(\frac{\kappa}{4\pi}\bigg)^2\int_{0}^{2\pi}\sigma^{2}(\alpha,\Ab\cdot x_0)\int_\br \Hc \phi^{\prime}(r+\Ab \cdot \chx)\Hc \phi^{\prime}(r+\Ab\cdot \chy) \dd r \dd\alpha.
\end{split}
\end{align}
The integral with respect to $r$ in \eqref{C_lm} simplifies as follows:
\begin{equation}
\label{int_simplify}
\begin{split}
\int_{\mathbb{R}}
\Hc \phi^{\prime}(r+\Ab\cdot \chx)  \Hc \phi^{\prime}(r+\Ab\cdot \chy) \dd r
&= \left(\Hc \phi^{\prime}\star \Hc \phi^{\prime}\right)\left(\Ab\cdot (\chx - \chy)\right)\\
&= \left(\phi^{\prime}\star \phi^{\prime}\right)\left(\Ab\cdot (\chx - \chy)\right),
\end{split}
\end{equation}
and \eqref{Cov main} is proven. The last step of \eqref{int_simplify} follows since $\mathcal{F}\left(\Hc \phi^{\prime}\right)(\xi) = -i\text{sgn}(\xi)\mathcal{F}(\phi^{\prime})(\xi)$, and by the convolution theorem.

\section{Proof of \eqref{lim De}}\label{sec: 2nd DTB}

We can write \eqref{Deps} in the following form
\begin{equation}\label{Deps 1}
d_\e=\Delta\al\sum_{|\al_k|\le \pi} g(a_k,\alpha_k)+O(\e),\quad
g(r,\al):=\psi(r,\vec{\alpha}\cdot \check x)\sigma^2(\alpha,\vec\al\cdot x_0).
\end{equation}
By \eqref{Psi per}, $g(r,\al)=g(r+1,\al)$ for any $r$ and $\al$. Represent $g$ in terms of its Fourier series:
\be\label{four-ser}\begin{split}
g(r,\al)&=\sum_{m\in\mathbb Z} \tilde g_m(\al) e(-mr),\ e(r):=\exp(2\pi i r),\\
\tilde g_m(\al) &=\int_0^1 g(r,\al)e(mr)\dd r=\sigma^2(\alpha,\vec\al\cdot x_0)\int_\br \left[ \Hc \ik^{\prime}(r+\vec{\alpha}\cdot \check x)\right]^2 e(mr)\dd r\\
&=\sigma^2(\alpha,\vec\al\cdot x_0)e(-m\vec{\alpha}\cdot \check x)\tilde{\tilde g}_m,\
\tilde{\tilde g}_m:=\int_\br \left[ \Hc \ik^{\prime}(r)\right]^2 e(mr)\dd r.
\end{split}
\ee
Let us introduce the function $\rho(s):=(1+|s|)^{-M}$, $s\in\br$, where $M$ is the same as in Assumption~\ref{interp}(2). Then we have the following lemma.
\begin{lemma}\label{lem:psi psider}
One has
\be\label{four-coef-bnd}
|\tilde{\tilde g}_m| \le c\rho(m),\ |\tilde g_m^{\prime}(\al)|\le c_k (1+|m|)\rho(m),\ |\al|\le\pi,\ m\in\mathbb Z.
\ee
\end{lemma}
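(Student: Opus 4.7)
The plan is to interpret $\tilde{\tilde g}_m$ as the Fourier transform of $F(r):=[\Hc\ik'(r)]^2$ evaluated at $-2\pi m$ (in the convention $\hat f(\xi)=\int f(r)e^{-i\xi r}\dd r$), and to establish its polynomial decay by working on the Fourier side rather than integrating $[\Hc\ik']^2$ by parts $M$ times in physical space. The first step is to bound $\hat\ik$. Since $\ik$ is compactly supported and $\ik^{(M+1)}\in L^\infty(\br)$ by Assumption~\ref{interp}, the function $\ik^{(M+1)}$ lies in $L^1_c(\br)$. Combining $\widehat{\ik^{(M+1)}}(\xi)=(i\xi)^{M+1}\hat\ik(\xi)$ with the trivial bound $|\hat\ik(\xi)|\le\|\ik\|_{L^1}$ yields $|\hat\ik(\xi)|\le c(1+|\xi|)^{-(M+1)}$. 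Since $\widehat{\Hc\ik'}(\xi)=|\xi|\hat\ik(\xi)$, one gains $|\xi|^{M+1}$ from the derivative identity but loses one factor to the Riesz-type multiplier, giving $|\widehat{\Hc\ik'}(\xi)|\le c(1+|\xi|)^{-M}$.

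The second step is to pass from $\widehat{\Hc\ik'}$ to $\hat F$ via the convolution theorem, $\hat F=c\,\widehat{\Hc\ik'}*\widehat{\Hc\ik'}$. A standard splitting of the convolution integral into the regions $|\eta|\le|\xi|/2$ and $|\eta|>|\xi|/2$ yields, for $M>1$,
\[
|\hat F(\xi)|\le c\int_\br (1+|\eta|)^{-M}(1+|\xi-\eta|)^{-M}\,\dd\eta\le c(1+|\xi|)^{-M}.
\]
Evaluating at $\xi=-2\pi m$ and using $M\ge 3$ gives the first bound $|\tilde{\tilde g}_m|\le c\rho(m)$.

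For the derivative bound, I would differentiate $\tilde g_m(\al)=\sigma^2(\al,\vec\al\cdot x_0)\,e(-m\vec\al\cdot\chx)\,\tilde{\tilde g}_m$ in $\al$ using the product rule. The derivative of the first factor is bounded because $\sigma\in C^1$ on a compact set. Differentiating $e(-m\vec\al\cdot\chx)$ pulls down a factor $-2\pi i m\,\vec\al^\perp\cdot\chx$, which is bounded by $c|m||\chx|$ since $\vec\al^\perp=(-\sin\al,\cos\al)$ is a unit vector. Combining these with the bound on $|\tilde{\tilde g}_m|$ produces $|\tilde g_m'(\al)|\le c(1+|m|)\rho(m)$, with the constant depending on $\chx$ (and possibly absorbed into the notation $c_k$ of the statement).

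The main obstacle to avoid is the alternative physical-space approach, namely a direct $M$-fold integration by parts of $\int F(r)e(mr)\dd r$; this would require control of $F^{(M)}$, whose leading Leibniz term involves $(\Hc\ik')^{(M)}=\Hc(\ik^{(M+1)})$. Since $\ik^{(M+1)}$ is only assumed to be in $L^\infty_c$ and need not vanish at the boundary of its support, $\Hc(\ik^{(M+1)})$ can exhibit logarithmic blow-up there, making $L^1$ bounds delicate. The Fourier-side computation sidesteps this entirely, since the smoothness hypothesis enters only through the decay of $\hat\ik$, and the convolution estimate is clean once $M>1$.
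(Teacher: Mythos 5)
Your proof is correct, and it takes a genuinely different route from the paper. The paper's argument is entirely in physical space: it integrates $\int_{\br}[\Hc\ik'(r)]^2 e(mr)\,\dd r$ by parts $M$ times, each step producing a factor $(2\pi i m)^{-1}$, and then bounds the resulting integrand; the derivative bound for $\tilde g_m'(\al)$ follows from the product rule exactly as in your last paragraph. You instead establish the decay on the Fourier side: the chain $|\hat\ik(\xi)|\le c(1+|\xi|)^{-(M+1)}$ (from $\widehat{\ik^{(M+1)}}\in L^\infty$ plus the trivial $L^1$ bound), then $|\widehat{\Hc\ik'}(\xi)|\le c(1+|\xi|)^{-M}$, then the standard convolution estimate for $\widehat{[\Hc\ik']^2}$, is complete and gives exactly $|\tilde{\tilde g}_m|\le c\rho(m)$. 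What your approach buys is that the only input is the decay of $\hat\ik$, so you never have to differentiate $[\Hc\ik']^2$ at all. I would only temper your closing remark: the obstacle you identify in the physical-space route is real but not fatal. The worst Leibniz term after $M$ integrations by parts is $2(\Hc\ik')\cdot\Hc(\ik^{(M+1)})$; since $\Hc$ of a compactly supported $L^\infty$ function has at most logarithmic singularities at the endpoints of the support (hence is locally integrable and decays at infinity) and $\Hc\ik'$ is bounded, the $M$-th derivative of $[\Hc\ik']^2$ is still in $L^1$ and the paper's integration by parts goes through; your method simply avoids having to verify this.
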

The proof is immediate using assumption~\ref{interp}(2) and integrating by parts $M$ times.
By the last lemma, the Fourier series for $g$ converges absolutely. From \eqref{Deps 1} and \eqref{four-ser}, 
\be\label{recon-ker-v2}
\begin{split}
d_\e&={\Delta\al}\sum_{m\in\mathbb Z} \sum_{|\al_k|\le \pi} e\left(-m \frac{\vec\al_k\cdot x_0-\bar p}\e\right)\tilde g_m(\al_k)+O(\e).
\end{split}
\ee
To prove \eqref{lim De}, it suffices to prove the following two statements:
\begin{align}\label{extra terms I}
&\e\sum_{m\not=0}\bigg| \sum_{|\al_k|\le \pi} e\left(-m \frac{\vec\al_k\cdot x_0}\e\right) \tilde g_m(\al_k)\biggr|=O(\e^{1/2}),\\
&\label{extra terms II}
\sum_{|\al_k|\le\pi} \int_{|\al-\al_k|\le \Delta\al/2}|\tilde g_0(\al)-\tilde g_0(\al_k)|\dd\al=O(\e),\quad \e\to 0.
\end{align}

For a compact $I$ and a $C^k(I)$ function $\phi$ define
\be\label{mx}
\phi_{\text{mx}}^{(k)}(I):=\max_{\al\in I}|\phi^{(k)}(\al)|,\ k\ge 0.
\ee
Set $\ga=1/(2(M-2))$. Since $\sum_{|m|\ge \e^{-\ga}}\rho(m)=O(\e^{1/2})$, in \eqref{extra terms I} we can restrict $m$ to the range $1\le|m|\le\e^{-\ga}$. The result \eqref{extra terms II} is obvious, because $|\tilde g_0^{\prime}(\al)|$ is bounded on $[-\pi,\pi]$.

The following lemma is proven in \cite{Katsevich_2024_BV}.

\begin{lemma}\label{lem:sum1 est}
Let $I$ be an interval. Pick two functions $\phi$ and $g$ such that $\phi\in C^2(I)$,  $\phi_{\text{mx}}^{\prime\prime}(I)<\infty$; and $g\in C^1(I)$. 
Suppose that for some $l\in \mathbb Z$ one has
\be\label{Omega_eps}
|\kappa  \phi^{\prime}(\al)-l|\le 1/2\text{ for any }\al\in I.
\ee
Denote 
\be\label{h-fn}\begin{split}
&\fks(\al):=\phi(\al)-(l/\kappa)\al,\\
&h_l(\al):=\frac{\pi\kappa \phi^{\prime}(\al)}{\sin(\pi\kappa \phi^{\prime}(\al))}\text{ if } \phi^{\prime}(\al)\not=0,
\end{split}
\ee
and $h_l(\al):=1$ if $\phi^{\prime}(\al)=0$. For all $\e>0$ sufficiently small, one has
\be\label{sum est}\begin{split}
&\biggl|\Delta\al\sum_{\al_k\in I}g(\al_k)e\biggl(\frac{\phi(\al_k)}\e\biggr)-\int_{I} g(\al)h_l(\al) e\biggl(\frac{\fks(\al)}\e\biggr)\dd \al\biggr|\\
&\le c\e\biggl[\bigl(1+\e\phi_{\text{mx}}^{\prime\prime}(I)\bigr)\int_I|g^{\prime}(\al)|\dd\al+\phi_{\text{mx}}^{\prime\prime}(I)\int_I|g(\al)|\dd\al \biggr],
\end{split}
\ee
where the constant $c$ is independent of $\e$, $\phi$,  $g$, and $I$.
\end{lemma}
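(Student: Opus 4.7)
The plan is a direct quadrature-style analysis on each sub-interval $I_k := [\al_k - \Delta\al/2, \al_k + \Delta\al/2]$, exploiting the fact that $\al_k = k\Delta\al = k\kappa\e$ makes the ``fast'' component of the phase integer-valued at sample points. The first ingredient is the algebraic identity
\begin{equation*}
e\bigl(\phi(\al_k)/\e\bigr) = e\bigl(\fks(\al_k)/\e\bigr)\cdot e(lk) = e\bigl(\fks(\al_k)/\e\bigr),\quad lk\in\Zb,
\end{equation*}
which replaces the fast phase $\phi/\e$ by the slow phase $\fks/\e$ in the discrete sum. By \eqref{Omega_eps}, $|\kappa\fks^{\prime}(\al)| \le 1/2$, so $\fks(\al)/\e$ changes by at most $\pi$ across each $I_k$, and the quadrature will be legitimate.

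The second ingredient is exact integration against the linearized phase. On each $I_k$ I Taylor-expand $\fks(\al) = \fks(\al_k) + \fks^{\prime}(\al_k)(\al - \al_k) + \text{remainder}$, retaining only the linear part. A direct calculation using $\Delta\al/\e = \kappa$ and $\sin(\pi(x-l)) = (-1)^l\sin(\pi x)$ yields
\begin{equation*}
h_l(\al_k)\int_{I_k} e\bigl(\fks(\al_k)/\e + \fks^{\prime}(\al_k)(\al - \al_k)/\e\bigr)\, d\al = \Delta\al\, e\bigl(\fks(\al_k)/\e\bigr),
\end{equation*}
so the definition of $h_l$ is exactly the compensation that makes the linear-phase version of the integral reproduce the discrete sum term by term. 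The right-hand side of \eqref{sum est} is then nothing but the total error incurred in reducing the actual integral to its linear-phase approximation and in replacing $g h_l$ by its value at the sample point.

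The third step is to collect the three sources of error. Replacing $g(\al)h_l(\al)$ by $g(\al_k)h_l(\al_k)$ on $I_k$ yields, after summation, a bound of order $\e\int_I|g^{\prime}|\dd\al$ from the variation of $g$ and of order $\e\phi_{\text{mx}}^{\prime\prime}(I)\int_I|g|\dd\al$ from the variation of $h_l$, using $|h_l^{\prime}| = O(|\phi^{\prime\prime}|)$ via the chain rule applied to $h_l$ as a function of $\phi^{\prime}$. The quadratic phase remainder, bounded by $|\fks(\al) - \fks(\al_k) - \fks^{\prime}(\al_k)(\al - \al_k)|/\e \le \phi_{\text{mx}}^{\prime\prime}(I)\Delta\al^2/(8\e) = O(\e\,\phi_{\text{mx}}^{\prime\prime}(I))$ combined with $|e(\theta) - 1| \le 2\pi|\theta|$, contributes another $\e\,\phi_{\text{mx}}^{\prime\prime}(I)\int_I|g|\dd\al$; a cross term coming from the joint variation of $g$ and the phase remainder then produces the $\e^2\phi_{\text{mx}}^{\prime\prime}(I)\int_I|g^{\prime}|\dd\al$ contribution appearing in \eqref{sum est}.

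The main obstacle will be verifying the precise sign and absolute-value conventions in the definition of $h_l$ against the linearized integral (through the identity $\sin(\pi(x-l)) = (-1)^l\sin(\pi x)$), and producing per-interval error estimates that are uniform in $k$ while expressed purely in terms of $\int_I|g^{\prime}|\dd\al$ and $\int_I|g|\dd\al$ rather than pointwise derivatives, as forced by the $C^1$ regularity of $g$. In particular, one must absorb differences of the form $|g(\al) - g(\al_k)|$ into $\int_{I_k}|g^{\prime}|\dd\al$ by the fundamental theorem of calculus rather than by Taylor, and the two intervals straddling the endpoints of $I$ require a separate bookkeeping argument to confirm that boundary contributions are subsumed by the same bound.
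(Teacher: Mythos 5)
The paper itself does not prove this lemma --- it is quoted verbatim from \cite{Katsevich_2024_BV} --- so there is no in-paper argument to compare against; your strategy is the standard (and surely the intended) one. The skeleton is right: $\al_k=k\kappa\e$ gives $e(\phi(\al_k)/\e)=e(\fks(\al_k)/\e)$; exact integration of the linearized phase over $I_k=[\al_k-\Delta\al/2,\al_k+\Delta\al/2]$ produces the factor $\Delta\al\,\sin(\pi\kappa\fks^{\prime}(\al_k))/(\pi\kappa\fks^{\prime}(\al_k))$, whose reciprocal is the weight $h_l$; and the three error sources (variation of $g$, variation of $h_l$, quadratic phase remainder) give per-cell bounds of size $\Delta\al\int_{I_k}|g^{\prime}|$, $\phi_{\text{mx}}^{\prime\prime}(I)\Delta\al\int_{I_k}|g|$ and $\e\phi_{\text{mx}}^{\prime\prime}(I)\int_{I_k}|g|$, which sum to the right-hand side of \eqref{sum est}. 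One point you flag but must actually resolve: the exact-reproduction identity forces $h_l(\al)=\pi\kappa\fks^{\prime}(\al)/\sin(\pi\kappa\fks^{\prime}(\al))$, i.e.\ $\phi^{\prime}$ in \eqref{h-fn} must be read as $\fks^{\prime}=\phi^{\prime}-l/\kappa$. The identity $\sin(\pi(x-l))=(-1)^l\sin(\pi x)$ does \emph{not} reconcile the two versions, since the numerators $\pi\kappa\phi^{\prime}$ and $\pi(\kappa\phi^{\prime}-l)$ still differ; with the literal formula $h_l$ is even undefined when $\phi^{\prime}\equiv l/\kappa$, $l\not=0$. This is a transcription slip in the statement (note that \eqref{ratio bnd} later invokes $|\kappa\phi^{\prime}(\al)|\le1/2$ on $I$, which holds only for $\fks^{\prime}$), but your write-up should state and use the corrected $h_l$ it derives.

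The genuine gap is the endpoint bookkeeping you defer to the end: the boundary contributions are \emph{not} subsumed by the stated bound. Take $g\equiv1$ and $\phi(\al)=(l/\kappa)\al$, so $\fks\equiv0$, $h_l\equiv1$, $\phi_{\text{mx}}^{\prime\prime}(I)=0$ and $g^{\prime}\equiv0$; the sum equals $N\Delta\al$ with $N$ the number of grid points in $I$, the integral equals $|I|$, and their difference is generically of order $\Delta\al=\kappa\e$, while the right-hand side of \eqref{sum est} is exactly zero. The up-to-two partial cells at $\partial I$ therefore force an additional term of the form $c\e\, g_{\text{mx}}(I)$ (or $c\e(|g(a)|+|g(b)|)$) on the right-hand side; such a term is harmless where the lemma is applied (compare the $g_{\text{mx}}(I)$ term already present in \eqref{int1 est}), but your plan to ``confirm that boundary contributions are subsumed by the same bound'' cannot succeed as stated --- you should add the boundary term (or impose that $\partial I$ lies on half-grid points) rather than try to absorb it.
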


\noindent The following lemma is proven in section~\ref{ssec:prf int1 est}.

\begin{lemma}\label{lem:int1 est} Pick any interval $I=[a,b]$ and a function $g\in C^1(I)$. Set $\phi(\al):=-m \vec\al\cdot x_0$, $m\not=0$. Suppose 
\begin{enumerate}
\item $\phi^{\prime\prime}(\al)\not=0$, $\al\in(a,b)$.
\item There exists an integer, $l$, such that $\kappa  \phi^{\prime}(\al_0)=l$ for some $\al_0\in I$ and $  \phi^{\prime\prime}(\al_0)\not=0$.
\item $|\kappa  \phi^{\prime}(\al)-l|\le 1/2$ for any $\al\in I$.
\end{enumerate}
One has
\be\label{int1 est}\begin{split}
&\biggl|\int_{I} g(\al) h_l(\al) e\biggl(\frac{\fks(\al)}\e\biggr)\dd \al\biggr|
\le c \e^{1/2}\bigg[g_{\text{mx}}(I)+\int_I|g^{\prime}(\al)|\dd\al\biggr]\big(\phi^{\prime\prime}(\al_0)\big)^{-1/2},
\end{split}
\ee
where the constant $c$ is independent of $g$, $m$, $\e$, $l$ and $I$.
\end{lemma}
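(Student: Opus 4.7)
The integral
\[
J_\e := \int_I g(\al) h_l(\al) e\bigl(\fks(\al)/\e\bigr)\,\dd\al
\]
is an oscillatory integral whose phase $\fks$ has, by conditions 1 and 2, a non-degenerate critical point at $\al_0$: $\fks'(\al_0) = \phi'(\al_0) - l/\kappa = 0$ and $\fks''(\al_0) = \phi''(\al_0)\neq 0$. The target bound is exactly the form of a van der Corput / stationary phase estimate with a non-degenerate critical point, so my plan is to prove it by the classical near/far split plus a single integration by parts. Set $\la := |\phi''(\al_0)|$ and $\de := C_0(\e/\la)^{1/2}$ for a large absolute constant $C_0$, and write $I = I_{\text{near}}\cup I_{\text{far}}$ with $I_{\text{near}} := I\cap[\al_0-\de,\al_0+\de]$.

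Two preliminary facts drive the argument. First, under condition 3 the amplitude $h_l$ from \eqref{h-fn} is uniformly bounded on $I$, with integrable derivative, by constants independent of $m, l$. Second, combining condition 1 (so that $\fks'$ is strictly monotone on $I$) with the explicit formulas $\phi'(\al) = m|x_0|\sin(\al-\theta)$, $\phi''(\al) = m|x_0|\cos(\al-\theta)$ (writing $x_0 = |x_0|(\cos\theta,\sin\theta)$), and with condition 3, one obtains the pointwise lower bound $|\fks'(\al)| \ge c\la|\al - \al_0|$ on $I$, with $c$ an absolute constant.

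Granted these preliminaries, the near part is bounded trivially:
\[
\Bigl|\int_{I_{\text{near}}}g h_l e(\fks/\e)\,\dd\al\Bigr| \le 2\de\cdot g_{\text{mx}}(I)\cdot\|h_l\|_{L^\infty(I)} \le c(\e/\la)^{1/2}g_{\text{mx}}(I).
\]
On $I_{\text{far}}$, I will integrate by parts via $e(\fks/\e) = (\e/(2\pi i\fks'))(\dd/\dd\al) e(\fks/\e)$:
\[
\int_{I_{\text{far}}}g h_l e(\fks/\e)\,\dd\al = \frac{\e}{2\pi i}\Bigl[\frac{g h_l}{\fks'} e(\fks/\e)\Bigr]_{\partial I_{\text{far}}} - \frac{\e}{2\pi i}\int_{I_{\text{far}}}\Bigl(\frac{g h_l}{\fks'}\Bigr)' e(\fks/\e)\,\dd\al.
\]
The boundary contributions at $\al_0\pm\de$ use $|\fks'(\al_0\pm\de)|\ge c\sqrt{\e\la}$, yielding $O(g_{\text{mx}}(I)\sqrt{\e/\la})$. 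For the interior remainder, expand $(gh_l/\fks')' = g'h_l/\fks' + gh_l'/\fks' - gh_l\phi_l''/(\fks')^2$. The first two pieces are controlled by $|\fks'|^{-1}\le c/\sqrt{\e\la}$ on $I_{\text{far}}$, contributing $c\sqrt{\e/\la}\bigl(\int_I|g'| + g_{\text{mx}}(I)\bigr)$. For the third, write $\phi_l''/(\fks')^2 = -(\dd/\dd\al)(1/\fks')$; by the monotonicity of $\fks'$ on each connected component of $I_{\text{far}}$, its $L^1$ norm equals the total variation of $1/\fks'$, which is at most $2/\min|\fks'|\le c/\sqrt{\e\la}$. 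Assembling all terms yields the claimed $|J_\e|\le c\e^{1/2}\bigl[g_{\text{mx}}(I) + \int_I|g'|\bigr](\phi''(\al_0))^{-1/2}$.

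The main anticipated obstacle lies in the second preliminary fact: establishing $|\fks'(\al)| \ge c\la|\al-\al_0|$ uniformly in $m, l, \al_0$. When $\la$ is small (so $\al_0$ lies near a zero of $\vec\al\cdot x_0$), condition 3 permits $I$ to be quite wide, and $\phi''$ on $I$ need not remain comparable to $\la$. The resolution uses the identity $\sin\bt - \sin\bt_0 = 2\cos((\bt+\bt_0)/2)\sin((\bt-\bt_0)/2)$ applied to $\fks'(\al) = m|x_0|\bigl(\sin(\al-\theta) - \sin(\al_0-\theta)\bigr)$, together with condition 3, to confine the cosine factor to a range comparable to $\cos(\al_0-\theta)$; the same identity simultaneously explains why the optimal exponent in the estimate involves $\phi''(\al_0)$ rather than $\min_I|\phi''|$.
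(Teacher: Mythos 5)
Your proposal is correct and reaches the stated bound by a genuinely different route from the paper. You run the textbook van der Corput / stationary-phase argument: split $I$ at scale $\de\sim(\e/|\phi''(\al_0)|)^{1/2}$ around the unique stationary point, bound the near part trivially, and integrate by parts once on the far part, with everything controlled by the single lower bound $|\fks'(\al)|\ge c\,|\phi''(\al_0)|\,|\al-\al_0|$. That bound does hold uniformly in $m$ and $l$: writing $\fks'(\al)=2m|x_0|\cos(\tfrac{\bt+\bt_0}{2})\sin(\tfrac{\bt-\bt_0}{2})$ with $\bt=\al-\al_{x_0}$, condition 1 confines $I$ to a half-period of the cosine, so $|\bt-\bt_0|\le\pi$ and $|\cos(\tfrac{\bt+\bt_0}{2})|\ge\tfrac12|\cos\bt_0|$ (since $\tfrac\pi2-|\tfrac{\bt+\bt_0}{2}|\ge\tfrac12(\tfrac\pi2-|\bt_0|)$ and $\sin(v/2)\ge\tfrac12\sin v$ on $[0,\pi/2]$); condition 3 is not actually needed for this step. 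The paper instead substitutes $u=(\fks/\e)^{1/2}$ to rewrite the integral against $\dd E(u)$ with $E(t)=\int_0^t e(s^2)\dd s$, integrates by parts once using the boundedness of $E$, and then invokes two auxiliary lemmas: Lemma~\ref{lem:aux ineq} (the inequality $\max|f|/(f')^2\le c|f''(\al_0)|^{-1}$) to bound the amplitude $\fks^{1/2}/\sin(\pi\kappa\phi')$, and Lemma~\ref{lem:sign change} to control its total variation. Your route trades those two lemmas for one elementary derivative bound and is arguably more transparent; the paper's route avoids the near/far case analysis and keeps the whole estimate in a single integration by parts. Two small points to nail down in a write-up: (i) as printed, \eqref{h-fn} has $\pi\kappa\phi'$ in the numerator of $h_l$, which would make $h_l$ blow up at $\al_0$ for $l\ne0$; the definition actually used (compare the identity in \eqref{int1_1 est}) has $\pi\kappa\fks'=\pi(\kappa\phi'-l)$ in the numerator, and only with that reading is your preliminary fact that $h_l$ is uniformly bounded with $\int_I|h_l'|\le c$ correct (the latter because $\int_I|\phi''|\dd\al=|\phi'(b)-\phi'(a)|\le1/\kappa$ by condition 3 and monotonicity of $\phi'$); (ii) $\partial I_{\text{far}}$ may also include the endpoints $a,b$ of $I$, but there too $|\fks'|\ge c(\e|\phi''(\al_0)|)^{1/2}$ by the same lower bound, so the boundary terms are unaffected.
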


In what follows we use Lemmas~\ref{lem:sum1 est} and \ref{lem:int1 est} with $g(\al):=\tilde g_m(\al)$. Recall that $\phi(\al)=-m \vec\al\cdot x_0$ and $m\not=0$. Let $\alst\in(-\pi,\pi]$ be the global maximum of $\phi^{\prime}$, so $\phi^{\prime\prime}(\alst)=0$ and $\vec\al_\star\cdot x_0=0$, see Figure~\ref{fig:sine}. Clearly, $\phi(\al)=-m|x_0| \cos(\al-\al_{x_0})$ and $\phi^{\prime}(\al)=m|x_0| \sin(\al-\al_{x_0})$. Hence we can write 
\be\label{phi pr alt}
\kappa\phi^{\prime}(\al)=\cx m\cos(\al-\alst),\ \cx:=\kappa|x_0|\text{sgn}(m),\ |\al|\le\pi. 
\ee

\begin{figure}[h]
{\centerline{
{\epsfig{file={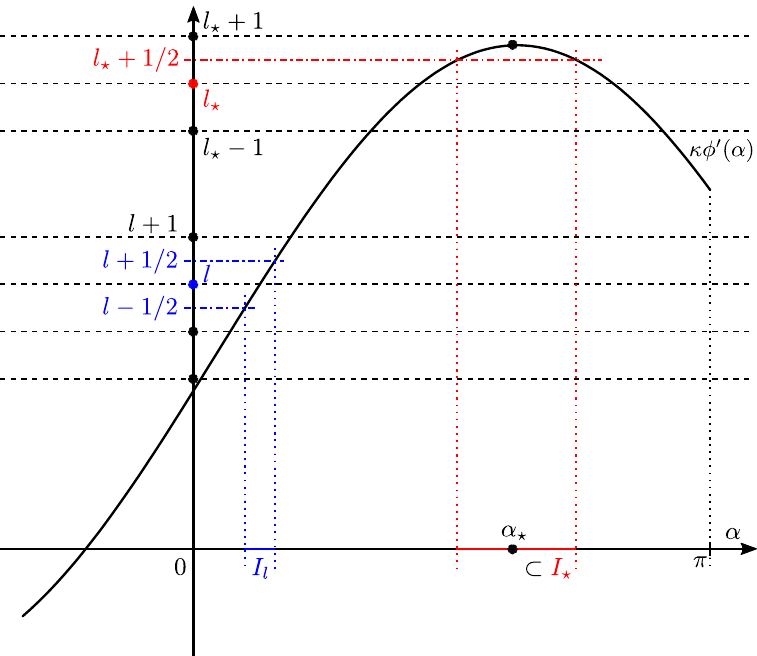}, width=10cm}}
}}
\caption{Illustration of the quantity $l_\star$, the set $I_l$, \and one of the two intervals that make up $I_\star$, namely the interval $\{|\al|\le\pi: \kappa\phi^{\prime}(\al)\ge l_\star+(1/2)\}$. The other interval $\{|\al|\le\pi: \kappa\phi^{\prime}(\al)\le -(l_\star+(1/2))\}$ is not visible.}
\label{fig:sine}
\end{figure}

Given $r\in\br$, let $\lfloor r\rfloor$ denote the floor function, i.e. the integer $n\in\mathbb Z$ such that $n\le r<n+1$. Denote (Figure~\ref{fig:sine})
\be\label{special l}
l_\star(m):=\lfloor\kappa \phi^{\prime}(\alst)\rfloor,\ m\not=0.
\ee
Note that $\kappa\phi^{\prime}(\alst)$ is not an integer for any $m\in\mathbb Z$, because $|\kappa\phi^{\prime}(\al_\star)|=|m \kappa x_0|$ is irrational (see Assumption~\ref{ass:x0}(1)). Consider the set of integers
\be\label{Lm def}
L(m):=\{l\in \mathbb Z:\, l=\kappa\phi^{\prime}(\al)\text{ for some }|\al|\le\pi\},\ m\not=0.
\ee
Clearly, $l\in L(m)$ is equivalent to $|l|\le l_\star(m)$. Define the set
\be\label{exc intervals}\begin{split}
&I_{\star}=\{|\al|\le\pi: |\kappa\phi^{\prime}(\al)|\ge l_\star(m)+(1/2)\}.
\end{split}
\ee
An illustration of one part of $I_\star$, namely the interval $\{|\al|\le\pi: \kappa\phi^{\prime}(\al)\ge l_\star(m)+(1/2)\}$, is shown in red in Figure~\ref{fig:sine}. The other interval $\{|\al|\le\pi: \kappa\phi^{\prime}(\al)\le -(l_\star(m)+(1/2))\}$ is not visible. It is possible that $I_\star$ is empty.
Additionally, consider the sets:
\be\label{main intervals}\begin{split}
&I_l:=\{\al\in[-\pi,\pi]: |\kappa\phi^{\prime}(\al)-l|\le 1/2\},\ |l|\le l_\star(m),
\end{split}
\ee
see a blue interval in Figure~\ref{fig:sine}. Again, the second part of $I_l$, which is a subset of $[-\pi,0]$, is not visible. For simplicity of notation, the dependence of the sets $I_\star$ and $I_l$ on $m$ is omitted. Each $I_l$ is the union of at most three intervals. As is easily seen, each subinterval that makes up $I_l$ satisfies the assumptions of Lemma~\ref{lem:int1 est}. By construction,
\be\label{union}
\cup_{|l|\le l_\star(m)}I_l \cup I_{\star}=[-\pi,\pi].
\ee

The intervals that make up $I_l$ are ‘regular’ in the sense that each of them satisfies the assumptions of Lemma~\ref{lem:int1 est}, so the corresponding integrals can be estimated using \eqref{int1 est}. Each of the two intervals that make up $I_{\star}$ is ‘exceptional’: \eqref{int1 est} does not apply to them, because there is no $l$ such that $\kappa  \phi^{\prime}(\al_0)=l$ for some $\al_0\in I_\star$.  
Thus, in addition to the regular sets $I_l$, we have to consider the exceptional set $I_{\star}$. Since Lemma~\ref{lem:int1 est} does not apply to $I_\star$, estimation of its contribution requires special handling. The following two lemmas are proven in Appendix~\ref{sec:two lems}.

\begin{lemma}\label{lem:exc cases} Under the assumptions of Theorem~\ref{lem:Lyapunov} one has
\be\begin{split}
\label{exc II}
\e\sum_{1\le |m|\le\e^{-\ga}}\biggl|\sum_{\al_k\in I_\star}\tilde g_m(\al_k)e\biggl(\frac{\phi(\al_k)}\e\biggr)\biggr|
&=O(\e)\text{ if } M>\nu+1.
\end{split}
\ee
\end{lemma}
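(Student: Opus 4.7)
The plan is to control the inner sum $S_m := \sum_{\al_k \in I_\star}\tilde g_m(\al_k) e(\phi(\al_k)/\e)$ by a Kusmin--Landau-type oscillatory-sum bound combined with Abel summation, and then to sum the resulting bound over $m$ using the type-$\nu$ lower bound $\la_m := \langle\kappa|x_0|m\rangle \ge cm^{-\nu_1}$ provided by Assumption~\ref{ass:x0}(1) (for any $\nu_1>\nu$). First I would reduce to each of the two components of $I_\star$ (near $\alst$ and near $\alst+\pi$), treated symmetrically, and split each component at its extremum of $\kappa\phi'$ into two monotonic pieces on which $\kappa\phi'(\al) \in [l_\star+1/2,\,l_\star+1-\la_m]$ (or its negative) is strictly monotonic.

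Next I would factor the summand as
\[
\tilde g_m(\al_k)\,e(\phi(\al_k)/\e) \;=\; \tilde{\tilde g}_m\, A(\al_k)\, e(\Psi(\al_k)),
\]
with $A(\al):=\sigma^2(\al,\vec\al\cdot x_0)$ and $\Psi(\al):=-m\vec\al\cdot(\chx+x_0/\e)$, so that the rapidly oscillating factor $e(-m\vec\al\cdot\chx)$ is absorbed into the combined phase $\Psi$ and $A$ is left as a uniformly bounded, uniformly Lipschitz amplitude with $\sup_{I_\star}|A|+\int_{I_\star}|A'|\,d\al \le c$. On each monotonic piece the discrete phase increment $\Psi'(\al_k)\Delta\al = \kappa\phi'(\al_k)+O(m\e)$ lies within $1/2$ of the integer $l_\star+1$, at distance $\ge \la_m - O(m\e) \ge \la_m/2$ from $l_\star+1$ throughout the range where $m\e$ is controlled by $\la_m$. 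Kusmin--Landau then gives $\bigl|\sum_{j\le k}e(\Psi(\al_j))\bigr| \le c/\la_m$, and Abel summation yields $|S_m|\le c|\tilde{\tilde g}_m|\,(\sup|A|+V(A))/\la_m \le c\rho(m)/\la_m$. Summing,
\[
\e\sum_{1\le|m|\le\e^{-\ga}}|S_m| \;\le\; c\e\sum_{m\ge 1} m^{\nu_1 - M} \;=\; O(\e),
\]
which converges precisely when $M>\nu+1$ (for $\nu_1$ chosen close enough to $\nu$).

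The main obstacle is justifying the Kusmin--Landau lower bound $\ge \la_m/2$ for the discrete phase increment once the $O(m\e)$ perturbation is included, since throughout the entire range $1\le m\le \e^{-\ga}$ this correction need not be dominated by $\la_m$. I would handle this by splitting the $m$-sum at an intermediate threshold: in the sub-regime $m\e\ll\la_m$ the estimate is immediate, while in the complementary sub-regime the at-most $O(1)$ "near-resonance" grid points (where the distance from $\Psi'(\al_k)\Delta\al$ to the nearest integer falls below $\la_m/2$) are excised and controlled trivially using $|I_\star|=O(m^{-1/2})$, leaving the two "good" sub-intervals on which Kusmin--Landau still produces $|B_k|\le c/\la_m$. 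Equivalently, one can convert $\Delta\al\,S_m$ to an oscillatory integral via Lemma~\ref{lem:sum1 est} applied with $l=l_\star+1$ (which is legal precisely because $|\kappa\phi'(\al)-(l_\star+1)|\le 1/2$ on $I_\star^+$) and bound that integral by integration by parts, using $|\fks'(\al)|\ge\la_m/\kappa$ on $I_\star$ and the integrability of the $\sim m/\la_m$ spike of $h_{l_\star+1}$ at $\alst$.
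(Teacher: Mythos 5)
Your overall strategy is sound, and your closing ``equivalent'' alternative --- convert $\Delta\al\,S_m$ to an oscillatory integral via Lemma~\ref{lem:sum1 est} with $l=l_\star+1$ and integrate by parts, using $|\kappa\fks'|\ge\la_m$ on $I_\star$ --- is precisely the paper's proof (Lemma~\ref{lem:exc cases alt2}). Your primary route, a discrete Kusmin--Landau bound plus Abel summation applied directly to the sum, is a genuine alternative, and it rests on the same three structural facts the paper uses: on $I_\star$ the quantity $\kappa\phi'(\al)$ stays at distance at least $\la_m=\langle\kappa|x_0|m\rangle$ from the nearest integer $l_\star+1$; the type-$\nu$ bound $\la_m\ge c\,m^{-\nu_1}$; and the Fourier decay $|\tilde{\tilde g}_m|\le c\rho(m)$. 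Your observation that the factor $e(-m\vec\al\cdot\chx)$ must be absorbed into the phase (leaving $\sigma^2(\al,\vec\al\cdot x_0)$ as a uniformly BV amplitude) is the right move; without it Abel summation loses a factor of $|m|^{1/2}$ and the $m$-sum would require $M>\nu+3/2$.

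The one step that fails as written is your treatment of the regime where the $O(m\e)$ perturbation of the discrete phase increment is not dominated by $\la_m$. The near-resonance set is \emph{not} ``at most $O(1)$ grid points'': the perturbed increment equals $\cx m\cos(\al-\alst)+O(m\e)$, which is nearly stationary at $\alst$, so the set where it comes within $\la_m/2$ of $l_\star+1$ is an interval of length $\sim\e^{1/2}$ around $\alst$, containing $\sim\e^{-1/2}$ grid points. Moreover, the fallback trivial bound ``using $|I_\star|=O(m^{-1/2})$'' contributes $\e\sum_{m\ge m_0}\rho(m)m^{-1/2}\e^{-1}\sim \e^{(M-1/2)/(1+\nu_1)}$ with $m_0\sim\e^{-1/(1+\nu_1)}$, which is \emph{not} $O(\e)$ under $M>\nu+1$ alone (take $\nu=2$, $M=3.1$). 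The step is repairable: bound the bad set by its actual length $O(\e^{1/2})$, giving a contribution $\e^{1/2}\sum_{m\ge m_0}\rho(m)\sim\e^{1/2+(M-1)/(1+\nu_1)}$, which is $O(\e)$ precisely because $M>\nu+1\ge(3+\nu)/2$ for $\nu\ge1$; on the complementary ``good'' grid points Kusmin--Landau applies as you describe. Alternatively, drop the discrete route in this regime entirely and use your integral version, which is the paper's argument and avoids the issue because Lemma~\ref{lem:sum1 est} carries its own error term. With that repair the proposal is correct.
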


\noindent Let the right side of \eqref{int1 est}, where $g$ is replaced by $\tilde g_m$, be denoted $W_{l,m}$. Then we have the following lemma.

\begin{lemma}\label{lem:generic} Under the assumptions of Theorem~\ref{lem:Lyapunov} one has
\be\label{sum-all}
\sum_{1\le |m|\le\e^{-\ga}}\sum_{|l|\in l_\star(m)} W_{l,m}=O(\e^{1/2})\text{ if } M>\max((\nu+7)/4,5/2).
\ee
\end{lemma}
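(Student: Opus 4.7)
The plan is to estimate $W_{l,m}$ directly via the Fourier bounds on $\tilde g_m$ from Lemma~\ref{lem:psi psider} and the explicit form $\phi(\al)=-m\Ab\cdot x_0$, then split the inner sum over $l$ into a bulk part ($|l|\le l_\star(m)-2$, where standard stationary-phase estimates apply cleanly) and a boundary part ($|l|=l_\star(m)$, where $\phi''$ nearly vanishes), and finally sum in $m$ against the weight $\rho(m)=(1+|m|)^{-M}$.

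For the first reduction, writing $\tilde g_m(\al)=\sigma^2(\al,\Ab\cdot x_0)e(-m\Ab\cdot\chx)\tilde{\tilde g}_m$ and combining smoothness of $\sigma^2$ with $|\tilde{\tilde g}_m|\le c\rho(m)$ yields $(\tilde g_m)_{\text{mx}}(I_l)\le c\rho(m)$ and $\int_{I_l}|\tilde g_m'(\al)|\dd\al\le c(1+|m|)\rho(m)|I_l|$, so I would be left with estimating
\[ \sum_{1\le|m|\le\e^{-\ga}}\rho(m)\sum_{|l|\le l_\star(m)}\bigl[1+(1+|m|)|I_l|\bigr]\,|\phi''(\al_0(l,m))|^{-1/2}, \]
where $\al_0(l,m)$ is the stationary phase point of Lemma~\ref{lem:int1 est}. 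A short computation from $\phi'(\al)=m|x_0|\sin(\al-\al_{x_0})$ (with $\al_{x_0}$ the angular position of $x_0$) gives $|\phi''(\al_0(l,m))|=\kappa^{-1}\sqrt{(l_0-|l|)(l_0+|l|)}$, where $l_0:=\kappa m|x_0|$ and $l_\star(m)=\lfloor l_0\rfloor$.

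For the bulk, on each subinterval comprising $I_l$ one has $|\phi''(\al_0)|\ge c\sqrt{l_0(l_\star-|l|+1)}/\kappa$ and $|I_l|\le c/|\kappa\phi''(\al_0)|$; treating the resulting sums over $|l|\le l_\star-2$ as Riemann sums and using $\sum_{k=1}^{l_\star}(kl_0)^{-1/4}\le cl_0^{1/2}$ and $\sum_{k=1}^{l_\star}(kl_0)^{-3/4}\le cl_0^{-1/2}$ yields the bulk contribution $\le c\e^{1/2}\rho(m)m^{1/2}$.

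The hard part is the boundary $l=\pm l_\star(m)$: there $l_0-l_\star=\{l_0\}$ can be arbitrarily small, threatening to blow up $|\phi''(\al_0(\pm l_\star,m))|^{-1/2}$. Here Assumption~\ref{ass:x0}(1) enters: the type-$\nu$ condition gives $\{l_0\}\ge cm^{-\nu_1}$ for any $\nu_1>\nu$, whence $|\phi''(\al_0(\pm l_\star,m))|^{-1/2}\le cm^{(\nu_1-1)/4}$. A geometric analysis of $I_{\pm l_\star}$ near the degenerate critical point $\alst$ (two subcases depending on whether $\{l_0\}\le 1/2$ or $\{l_0\}>1/2$; in the former $I_{\pm l_\star}$ is a single interval enclosing $\alst$ and must be split at $\alst$ so that Lemma~\ref{lem:int1 est} applies on each piece) gives $|I_{\pm l_\star}|\le c/\sqrt{l_0}$ uniformly, so the boundary contribution is $\le c\e^{1/2}\rho(m)m^{(\nu_1+1)/4}$. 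The series $\sum_{m\ge1}m^{-M}[m^{1/2}+m^{(\nu_1+1)/4}]$ converges once $M$ exceeds both $3/2$ and $(\nu_1+5)/4$, and letting $\nu_1\searrow\nu$, together with the slack in the hypothesis $M>\max((\nu+7)/4,5/2)$, completes the argument. The principal obstacle is the clean geometric bookkeeping at the boundary---correctly splitting $I_{\pm l_\star}$ near $\alst$, applying Lemma~\ref{lem:int1 est} piecewise, and invoking the Diophantine estimate at exactly the right step.
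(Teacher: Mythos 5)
Your argument follows the paper's proof of Lemma~\ref{lem:generic} essentially step for step: the same reduction of $W_{l,m}$ via Lemma~\ref{lem:int1 est} and the Fourier bounds $|\tilde g_m|\le c\rho(m)$, the same explicit formula $|\phi''(\al_l^*)|\sim[(\kappa m|x_0|)^2-l^2]^{1/2}$, the same invocation of the type-$\nu$ Diophantine condition to bound $l_0-l_\star=\{l_0\}$ from below at the near-degenerate indices $|l|=l_\star(m)$, and the same final summation against $\rho(m)$. Your bookkeeping is in fact marginally sharper than the paper's (you exploit $|I_l|\lesssim|\phi''|^{-1}$ and $|I_{\pm l_\star}|\lesssim l_0^{-1/2}$, which the paper does not, arriving at the slightly weaker boundary requirement $M>(\nu+5)/4$); the only slip is that your bulk/boundary split omits $|l|=l_\star(m)-1$, which is harmless since $l_0-|l|\ge1$ there and it behaves exactly like the bulk.
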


\noindent Combining \eqref{exc II} and \eqref{sum-all} finishes the proof of \eqref{lim De}.

\section{Numerical experiments}\label{sec:numerics}

In this section, we present numerical experiments to verify the main results in section~\ref{sec:setting_mainres}. To do this, we apply \eqref{recon} to simulated noise draws, $\eta_{k,j}$, under the assumption that the useful signal is zero (see section~\ref{sec:setting_mainres}).

In the examples presented here, the entries $\eta_{k,j}:=\eta(\al_k,p_j)$ are drawn from a uniform distribution with mean zero and variance
\begin{equation}
\label{sigma_2_ex}
\sigma^2(\alpha,p) = (1/3)u(\al,p),\ u(\al,p):=[1+(1/2)\sin(\alpha)][1+(1/2)\sin(\pi p)].
\end{equation}
Specifically, we drew random numbers uniformly on $[-1,1]$ using the Matlab function ``rand," and scaled these by $\sqrt{u(\al,p)}$ to generate $\eta_{k,j}$ with sample variance $\sigma^2$ as in \eqref{sigma_2_ex}. Throughout the simulations presented, we set $\epsilon = 1/j_m$, where $j_m\geq 1$ is the sampling rate, and $\kappa$ (as in \eqref{params}) is set to $\kappa = 2\pi$. The reconstruction space is $[-1,1]^2$, $p_j = -1+ (j-1)/j_m$, $1\leq j\leq 2j_m+1$, and $\alpha_k = 2\pi \frac{k}{j_m}$, for $1\leq k\leq j_m$. 
\begin{figure}[!h]
\centering
\begin{subfigure}{0.4\textwidth}
\includegraphics[width=0.9\linewidth, height=6cm, keepaspectratio]{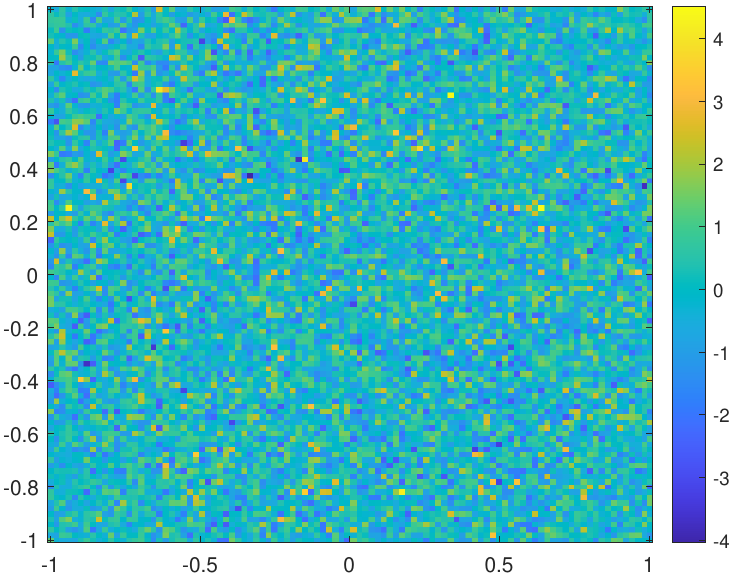}
\subcaption{reconstruction on $[-1,1]^2$} \label{F1a}
\end{subfigure}
\begin{subfigure}{0.4\textwidth}
\includegraphics[width=0.9\linewidth, height=6cm, keepaspectratio]{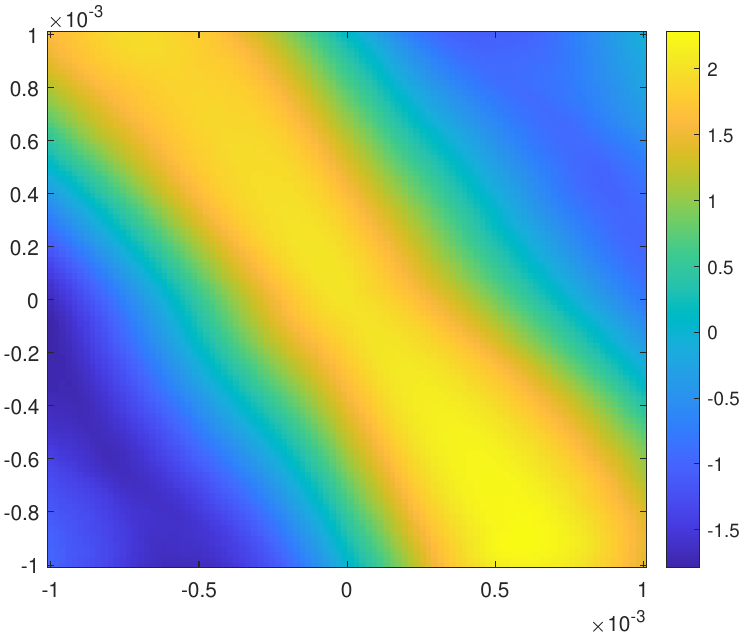} 
\subcaption{reconstruction on $[-\epsilon,\epsilon]^2$} \label{F1b}
\end{subfigure}
\caption{Example image reconstructions from uniform random noise with $j_m = 10^3$ on the full scale (a), and within an $\epsilon$ neighborhood of zero (b). }
\label{F1}
\end{figure}

For reconstruction, we use \eqref{recon}, where $\ik$ is the Keys kernel \cite{Keys1981}.
See figures \ref{F1a} and \ref{F1b}, where we have shown example image reconstructions from a uniform noise draw (as detailed above) on the full image scale (i.e., $[-1,1]^2$), and within an $\epsilon$ neighborhood of zero, respectively. 
The image in figure \ref{F1a} is noisy and the pixel values appear to vary independently. In contrast, in figure \ref{F1b}, the image appears smooth. This is in line with Theorem~\ref{GRF_thm}.
\begin{figure}[!h]
\centering
\begin{subfigure}{0.34\textwidth}
\includegraphics[width=0.9\linewidth, height=4cm, keepaspectratio]{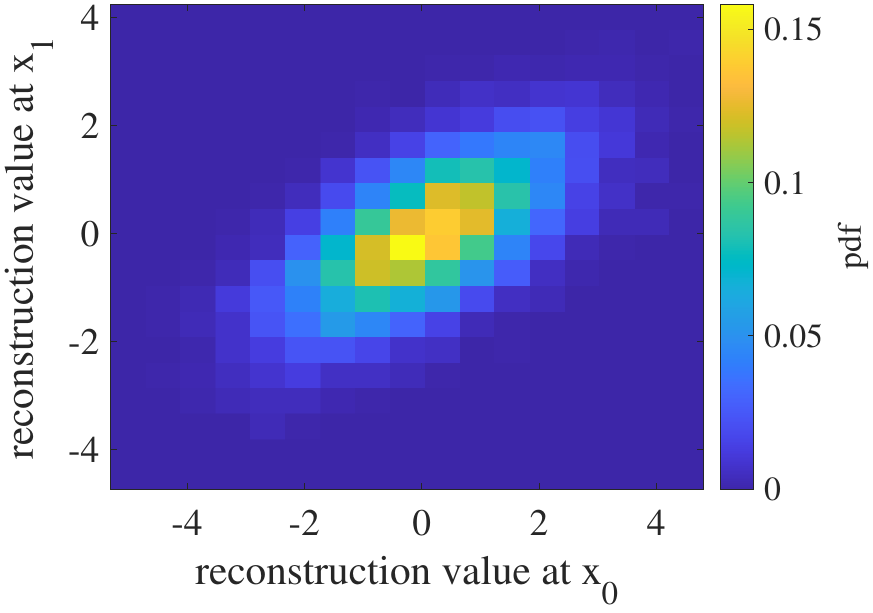}
\subcaption{observed pdf (histogram)} \label{F2a}
\end{subfigure}
\begin{subfigure}{0.34\textwidth}
\includegraphics[width=0.9\linewidth, height=4cm, keepaspectratio]{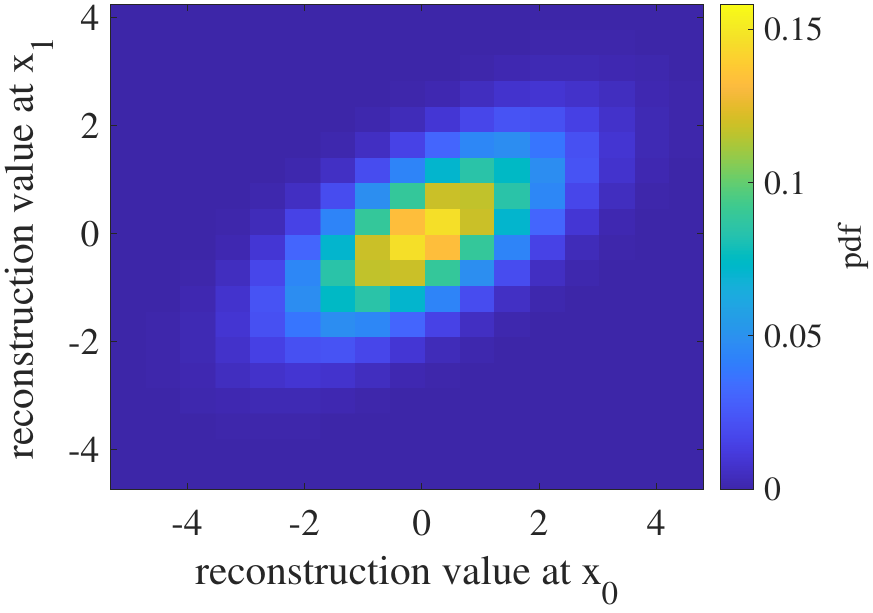} 
\subcaption{predicted pdf} \label{F2b}
\end{subfigure}
\begin{subfigure}{0.30\textwidth}
\includegraphics[width=0.9\linewidth, height=4cm, keepaspectratio]{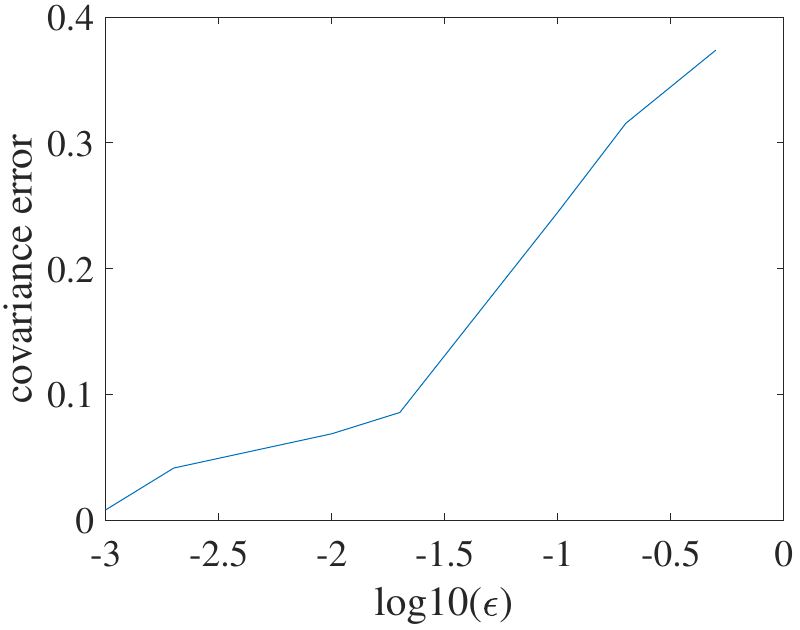}
\subcaption{covariance error with $\epsilon$} \label{F2c}
\end{subfigure}
\caption{Observed (a) and predicted (b) pdf functions for $x_0 = \frac{1}{4}(\sqrt{2},\sqrt{3})$ and $x_1 = x_0 + \frac\e{2\sqrt{2}}(1,1)$, with $j_m = 10^3$ and $n = 10^4$ samples. (c) - covariance error as a function of $\epsilon$; $n = 10^4$ samples were used to calculate the covariance error for all values of $\e$ in the plot in (c).}
\label{F2}
\end{figure}

We now aim to evaluate the accuracy of the covariance predictions provided by \eqref{Cov main}, and show that the reconstructed values in any $O{(\e)}$ size neighborhood follow a Gaussian distribution. To do this, we simulate $n = 10^4$ image reconstructions as in figure \ref{F1b} and calculate the covariance matrix and histogram between two fixed points, $x_0$ and $x_1$ within an $\epsilon$ neighborhood, and match this to the predictions given by $\text{Cov}(\chx,\chy)$ in \eqref{Cov main}. In figure \ref{F2a}, we show the observed pdf function (calculated using a histogram) which corresponds to $x_0 = \frac{1}{4}(\sqrt2,\sqrt3)$ and $x_1 = x_0 + (\epsilon/2)\chx$, where $\chx=(1/\sqrt 2)(1,1)$ and $\epsilon = 10^{-3}$. This matches well with the predicted Gaussian pdf in figure \ref{F2b}, and the least squares error is $\|P_o - P\|_2/\|P_o\|_2 = 0.07$, where $P_o$ and $P$ denote the vectorized images in figures \ref{F2a} and \ref{F2b}, respectively. The observed covariance matrix, $C_o$, and the predicted covariance matrix, $C$, are computed to be
\be
C_o = \begin{pmatrix} 1.34 & 0.86\\ 0.86 & 1.36\end{pmatrix},\quad C = \begin{pmatrix}  \text{Cov}(0,0) & \text{Cov}(0,\chx/2)\\ \text{Cov}(\chx/2,0)  & \text{Cov}(0,0)\end{pmatrix} = \begin{pmatrix} 1.36 & 0.86\\ 0.86 & 1.36\end{pmatrix}, 
\ee
where $\text{Cov}$ and $C$ are calculated using \eqref{Cov main}. We see that $C_o$ and $C$ are very close, and the error is $\|C-C_o\|_F/\|C_o\|_F = 0.01$. The observed mean is $(-0.0148,0.0003)$, which is close to zero as expected since the $\eta_{k,j}$ were drawn from a uniform distribution with mean zero. In this example, $x_1$ is fairly close to $x_0$ (i.e., within distance $\e/2$), as in figure \ref{F1b}, and they have highly correlated reconstructed values.

We note that these results are only valid as $\epsilon \to 0$. To illustrate this, see figure \ref{F2c}, where we have plotted the covariance error (computed using Frobenius norm and the same $x_0$ and $x_1$ points as before) against $\epsilon$. The error is increasing with $\epsilon$, and becomes $>20\%$ when $\epsilon>0.1$ is of a significant size relative to the size of the scanning region (i.e., $[-1,1]^2$). In this case, $\epsilon = 0.1$ is $5\%$ of the scanning region width. 
\begin{figure}[!h]
\centering
\begin{subfigure}{0.4\textwidth}
\includegraphics[width=0.9\linewidth, height=6cm, keepaspectratio]{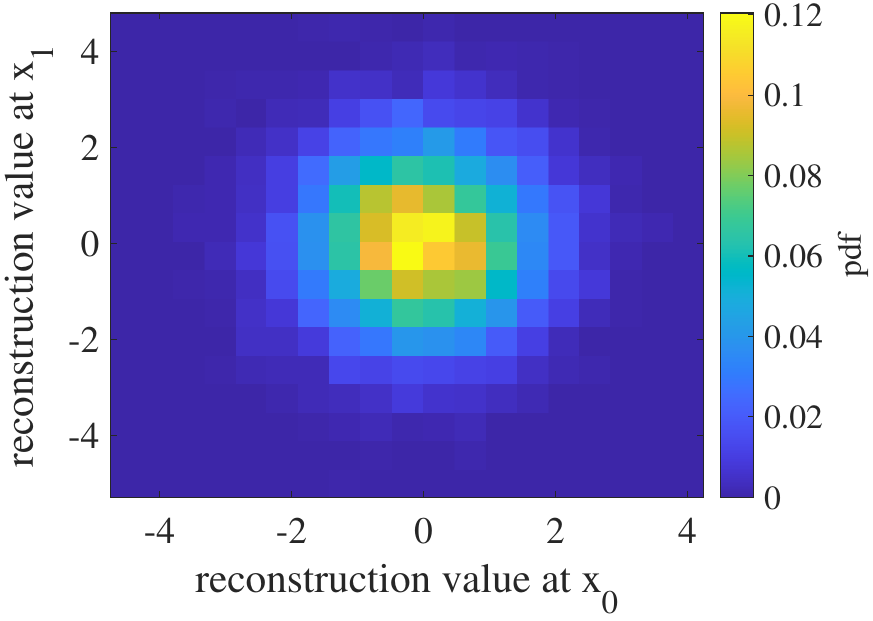}
\subcaption{observed pdf (histogram)} \label{F3a}
\end{subfigure}
\begin{subfigure}{0.4\textwidth}
\includegraphics[width=0.9\linewidth, height=6cm, keepaspectratio]{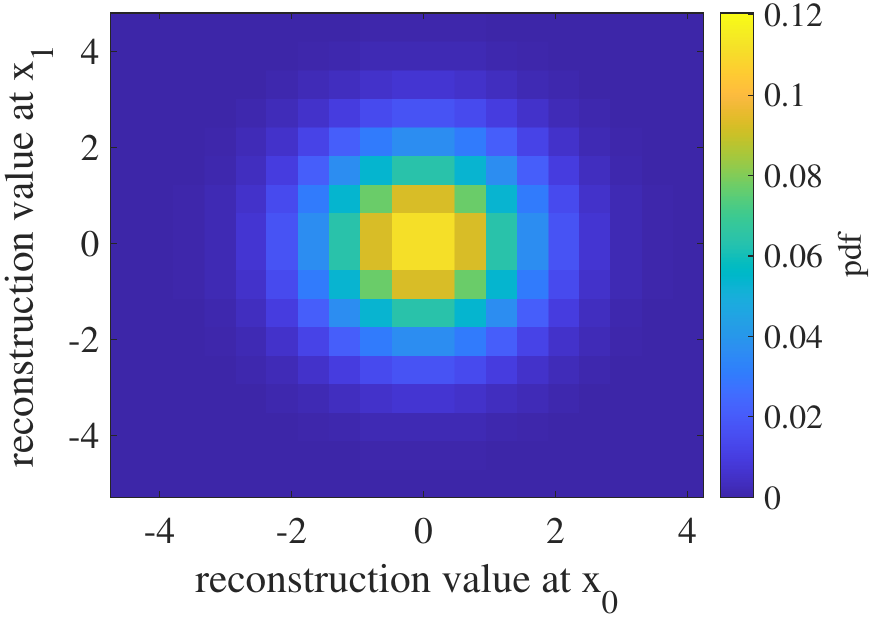} 
\subcaption{predicted pdf} \label{F3b}
\end{subfigure}
\caption{Observed (a) and predicted (b) pdf functions for $x_0 = \frac{1}{4}(\sqrt{2},\sqrt{3})$ and $x_1 = x_0 + \frac{5\epsilon}{\sqrt{2}}(1,1)$, with $j_m = 10^3$ and $n = 10^4$ samples.}
\label{F3}
\end{figure}

In the results thus far, the chosen $x_0$ and $x_1$ were within the same $\epsilon$ neighborhood. For $\epsilon$ small enough, (by Theorem~\ref{GRF_thm}), \eqref{Cov main} holds for pairs of points in larger regions relative to the size of $\e$. To validate this, we keep $x_0$ and $\chx$ the same as before, set $x_1 = x_0+5\e\chx$ with $\epsilon = 10^{-3}$, and calculate the observed and predicted pdf functions as in figure \ref{F2}. We use a sample size of $n = 10^4$, as in the previous example. See figure \ref{F3} for our results. The pdf functions presented in figures \ref{F3a} and \ref{F3b} match up well, and the least squares error is $\|P_o - P\|_2/\|P_o\|_2 = 0.07$, where $P_o$ and $P$ denote the vectorized images in figures \ref{F3a} and \ref{F3b}, respectively, as before. The observed covariance matrix, $C_o$, which corresponds to figure \ref{F3a}, and predicted covariance matrix, $C$, which corresponds to figure \ref{F3b}, are as follows.
\be
C_o = \begin{pmatrix} 1.31 & -0.003\\ -0.003 & 1.32\end{pmatrix},\quad
C = \begin{pmatrix}  \text{Cov}(0,0) & \text{Cov}(0,5\chx)\\ \text{Cov}(5\chx,0)  & \text{Cov}(0,0)\end{pmatrix} = \begin{pmatrix} 1.36 & -0.002\\ -0.002 & 1.36\end{pmatrix}.
\ee
The error again is quite small: $\|C-C_o\|_F/\|C_o\|_F = 0.04$. The observed mean is $(-0.01,-0.01)$, which is near zero as predicted. In this case, $x_0$ and $x_1$ are distance $5\e$ apart and have relatively uncorrelated reconstructed values when compared to the previous example where $|x_0-x_1|=\epsilon/2$. 

The above examples validate our theory and provide accurate predictions for the noise distribution in 2D CT reconstruction in small neighborhoods, e.g., in this case up to size $5\epsilon$.

\appendix

\section{Proof of approximation in equations \eqref{Deps} and \eqref{recon dttpr 2}.}\label{approx_step}

Note that \eqref{Deps} is a particular case of \eqref{recon dttpr 2} with $\chx=\chx_{i_1}=\chx_{i_2}$,  $i_1,i_2\in 1,\dots, K$, and $\vec\theta=(1,0,\dots,0)$, so we prove only \eqref{recon dttpr 2}. Clearly,
\begin{align}\label{c.5}
&\Delta \al\sum_{j,k} \Hc\ik^{\prime}(a_k-j+\vec{\alpha}_k\cdot \chx_{i_1})
 \Hc\ik^{\prime}(a_k-j+\vec{\alpha}_k\cdot \chx_{i_2})\sigma^2(\alpha_k, p_j)
={S}_1+{S}_2,
\end{align}
where $a_k$ are given in \eqref{recon dttpr 3} and
\begin{align*}
    {S}_1&= \Delta \al \sum_{j,k} \Hc\ik^{\prime}(a_k-j+\vec{\alpha}_k\cdot \chx_{i_1})
 \Hc\ik^{\prime}(a_k-j+\vec{\alpha}_k\cdot \chx_{i_2})\big(\sigma^2(\alpha_k, p_j)-\sigma^2(\alpha_k, \vec{\al}_k\cdot x_0)\big), \\
    {S}_2&= \Delta \al \sum_{j,k} \Hc\ik^{\prime}(a_k-j+\vec{\alpha}_k\cdot \chx_{i_1})
 \Hc\ik^{\prime}(a_k-j+\vec{\alpha}_k\cdot \chx_{i_2})\sigma^2(\alpha_k, \vec{\al}_k\cdot x_0).
\end{align*}
Let us define:
\beq
{\Phi}(a,t_1,t_2)=\sum_{j}\lvert \Hc\ik^{\prime}(a-j+t_1)
 \Hc\ik^{\prime}(a-j+t_2) (a-j)\rvert.
\eeq
Using that $\sigma\in C^1([-\pi,\pi]\times[-P,P])$ (Assumption~\ref{noi}(2)) and the property $\Hc\ik^{\prime}(t)=O(t^{-2})$, $t\to\infty$, we bound $S_1$ by:
\begin{align} \label{c.6}
\lvert {S}_1\rvert \leq c\epsilon \Delta\al\sum_{\lvert\al_k\rvert\leq \pi} {\Phi}(a_k,\vec{\alpha}_k\cdot \chx_{i_1},\vec{\alpha}_k\cdot \chx_{i_2}) =O(\e).
\end{align}
Using \eqref{c.5} and \eqref{c.6} in the expression for $\Eb\xi^2_{\e}$, we get \eqref{recon dttpr 2}.

\section{Proof of Lemma~\ref{lem:int1 est} and auxiliary results}\label{sec:first two lems}

\subsection{Proof of Lemma~\ref{lem:int1 est}}\label{ssec:prf int1 est}
We may assume without loss of generality that $\fks(\al_0)=0$ in \eqref{int1 est}. Otherwise, this can be  achieved by subtracting $\fks(\al_0)$ from $\fks(\al)$ in the exponent and not changing the value on the left. 
Then $\fks(\al)$ does not change sign on $I$. We may assume without loss of generality that $\fks(\al)\ge 0$ on $I$. Define the function $E(\al):=\int_0^\al e(t^2)\dd t$. Using the definition of $h_l$ (see \eqref{h-fn}):
\be\label{int1_1 est}\begin{split}
&J:=\int_{I} g(\al) h_l(\al) e\biggl(\frac{\fks(\al)}\e\biggr)\dd \al
=2\pi\kappa \e^{1/2}\int_{I} \frac{g(\al)\fks^{1/2}(\al)}{\sin(\pi\kappa \phi^{\prime}(\al))}\dd E((\fks(\al)/\e)^{1/2}).
\end{split}
\ee
Since $|E(t)|$ is bounded on $\br$, integrating by parts gives
\be\label{int1_2 est}\begin{split}
&|J|\le c \e^{1/2}
\biggl[\max_{\al\in I}\bigg|\frac{g(\al)\fks^{1/2}(\al)}{\sin(\pi\kappa \phi^{\prime}(\al))}\bigg|
+\int_{I}\bigg|\bigg[\frac{g(\al)\fks^{1/2}(\al)}{\sin(\pi\kappa \phi^{\prime}(\al))}\bigg]^{\prime}\bigg|\dd\al.
\end{split}
\ee
The following lemma is proven in appendix~\ref{ssec:sign change}.
\begin{lemma}\label{lem:sign change} The function $f(\al):=[\fks^{1/2}(\al)/\sin(\pi\kappa \phi^{\prime}(\al))]^{\prime}$ may change sign at most finitely many times on $I$ uniformly in $m\not=0$. 
\end{lemma}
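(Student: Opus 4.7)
The plan is to pass to the variable $v := \kappa\phi'(\al) - l \in [-1/2, 1/2]$, which on each subinterval of $I$ on which $\phi''$ has constant sign is a smooth diffeomorphism, with $v = 0$ at $\al_0$. Using $\phi'(\al) = |x_0||m|\cos(\al - \al_\star)$ and setting $\mu := \kappa|x_0||m|$, $w_0 := l/\mu \in (-1,1)$, a direct computation (integrating $dF/dv = v/(\kappa^2\phi''(\al(v)))$ with $F(0)=0$) shows that $F(v) := \fks(\al(v)) = (\mu/\kappa)T(v/\mu;w_0)$ where
\[
T(t;w_0) := \sqrt{1-w_0^2} - \sqrt{1-(w_0+t)^2} - w_0\bigl(\arcsin(w_0+t) - \arcsin w_0\bigr).
\]
A Taylor expansion of $T$ near $t=0$ yields $T(t;w_0) = t^2 R(t;w_0)$ with $R$ real-analytic in $t$ and $R(0;w_0) = 1/(2\sqrt{1-w_0^2}) > 0$. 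Hence $F(v) = v^2 g_m(v)$ with $g_m(v) := (\kappa\mu)^{-1}R(v/\mu;w_0) > 0$ real-analytic on $[-1/2,1/2]$, and so $F^{1/2}(v) = |v|\sqrt{g_m(v)}$.

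Using $\sin(\pi\kappa\phi') = (-1)^l\sin(\pi v)$, the function differentiated in the lemma becomes $\text{sgn}(v)\,H_m(v)$, where $H_m(v) := v\sqrt{g_m(v)}/[(-1)^l\sin(\pi v)]$ is smooth and non-vanishing on $[-1/2,1/2]$ (because $v/\sin(\pi v)$ is smooth and positive there). By the chain rule, $f(\al) = \text{sgn}(v)\,H_m'(v)\,\kappa\phi''(\al)$ for $v \ne 0$. Since $\kappa\phi''$ has fixed sign on each subinterval and $\text{sgn}(v)$ is constant on each of $(-1/2,0)$ and $(0,1/2)$, sign changes of $f$ on $I \setminus \{\al_0\}$ correspond exactly to zeros of $H_m'$ on $[-1/2,0) \cup (0,1/2]$, plus at most one sign change across $\al_0$. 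The proof therefore reduces to bounding the number of zeros of $H_m'$ uniformly in $(m,l)$.

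To do this I would write $(\log|H_m|)'(v) = (\log S)'(v) + (2\mu)^{-1}(R_t/R)(v/\mu;w_0)$ with $S(v) := \pi v/\sin(\pi v)$. A direct calculation gives $(\log S)''(v) = \pi^2/\sin^2(\pi v) - 1/v^2 > 0$ on $(-1/2,1/2)$ (using $|\sin(\pi v)| < \pi|v|$ for $v \ne 0$), with $(\log S)''(0) = \pi^2/3$, so $(\log S)'$ is strictly increasing with unique zero at $v=0$. In the non-degenerate regime $|w_0| \le 1 - \epsilon$ for any fixed $\epsilon > 0$, the perturbation is uniformly $O(1/\mu)$, so for $\mu$ sufficiently large the full log-derivative remains strictly increasing and $H_m'$ has at most one zero. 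The main obstacle is the degenerate regime $|w_0|$ close to $1$ (equivalently, $|l|$ close to $l_\star(m)$), in which $R$ and $R_t/R$ blow up; here I would invoke Assumption~\ref{ass:x0}(1) to obtain the quantitative lower bound $1 - |w_0| \gtrsim |m|^{-\nu_1 - 1}$, then refine the Taylor analysis of $T(t;w_0)$ tracking the $w_0$-dependence to recover a uniform bound. The finitely many remaining small-$|m|$ cases are handled directly using the real-analyticity of $H_m$ on $[-1/2,1/2]$.
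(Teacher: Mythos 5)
Your strategy is essentially the paper's: both proofs pass to the rescaled variable $v=\kappa\phi^{\prime}(\al)-l$ (the paper's $s$, up to sign and a constant), observe that the zero set of $f$ is governed by an equation whose ``universal'' part is $\pi v/\sin(\pi v)$ and whose remaining part is a perturbation that should be $O(1/m)$, conclude at most one zero for $|m|$ large, and invoke analyticity for the finitely many bounded $|m|$. Your computation $F(v)=v^2g_m(v)$ with $g_m>0$, the identity $\sin(\pi\kappa\phi^{\prime})=(-1)^l\sin(\pi v)$, the factorization $f=\operatorname{sgn}(v)H_m^{\prime}(v)\,\kappa\phi^{\prime\prime}$, and the convexity bound $(\log S)^{\prime\prime}\ge\pi^2/3$ are all correct, and in the regime $|w_0|\le1-\epsilon$ your monotonicity argument is sound and in fact more explicit than the paper's expansion of $\tan(\pi s)/(\pi s)-1$ near $s=0$.

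The step you yourself call ``the main obstacle'' is, however, a genuine unfilled gap rather than a routine refinement. When $|l|=l_\star(m)$ one can have $1-|w_0|=(\mu-l_\star)/\mu\sim\langle\kappa|x_0|m\rangle/\mu$, so $s_0^2=1-w_0^2$ may be as small as $\sim\mu^{-\nu_1-1}$. Your perturbation $(2\mu)^{-1}(R_t/R)(v/\mu;w_0)$ then has value $\sim w_0/(3\mu s_0^2)\sim 1/\langle\kappa|x_0|m\rangle$ at $v=0$ and $v$-derivative of order $\mu^{-2}s_0^{-4}$, which can be as large as $\mu^{2\nu_1}$; neither is dominated by $\pi^2/3$, so strict monotonicity of $(\log|H_m|)^{\prime}$ fails exactly there. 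Moreover, when $I_\star=\varnothing$ and $I_{l_\star}$ abuts $\al_\star$, the endpoint corresponds to $w_0+t=1$, where $R$ is not analytic and the change of variables $\al\mapsto v$ degenerates, so the fallback ``real-analyticity of $H_m$ on the closed interval'' is not available either. The Diophantine bound $1-|w_0|\gtrsim|m|^{-\nu_1-1}$ controls how degenerate things can get but does not restore smallness of the perturbation, so this case needs a genuinely different argument, not merely ``tracking the $w_0$-dependence.'' (In fairness, the paper's own proof is terse at the same spot: it asserts that $v_1(s/m)-1\to0$ uniformly, and the uniformity as $\al_0\to\al_\star$ is exactly what is delicate. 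But as a self-contained proof, your proposal is incomplete precisely where you flag it.)
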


Using the last lemma in \eqref{int1_2 est} gives
\be\label{int1_3 est}\begin{split}
&|J|\le c \e^{1/2}
\biggl[g_{\text{mx}}(I)+\int_I|g^{\prime}(\al)|\dd\al\biggr]\max_{\al\in I}\bigg|\frac{\fks^{1/2}(\al)}{\sin(\pi\kappa \phi^{\prime}(\al))}\bigg|.
\end{split}
\ee
Since $|\kappa \phi^{\prime}(\al)|\le 1/2$, $\al\in I$,
\be\begin{split}\label{ratio bnd}
\max_{\al\in I}\bigg|\frac{\fks^{1/2}(\al)}{\sin(\pi\kappa \phi^{\prime}(\al))}\bigg|
\le c\max_{\al\in I}\bigg|\frac{\fks^{1/2}(\al)}{\phi^{\prime}(\al)}\bigg|
\le c\big(\phi^{\prime\prime}(\al_l^*)\big)^{-1/2}.
\end{split}
\ee
Here we used the following lemma with $f(\al)=\fks(\al)$. 
\begin{lemma}\label{lem:aux ineq} If $I$ be an interval such that $\sin\al\not=0$ for any $\al$ in its interior. Define
\be\label{faux def}
f(\al):=\sin\al-\sin\al_0-\cos\al_0(\al-\al_0)
\ee
for some $\al_0\in I$ such that $\sin\al_0\not=0$. One has
\be\label{fineq}
\max_{\al\in I} [|f(\al)|/(f^{\prime}(\al))^2] \le c|f^{\prime\prime}(\al_0)|^{-1}.
\ee
\end{lemma}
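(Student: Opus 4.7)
\textbf{Proof plan for Lemma~\ref{lem:aux ineq}.} The idea is to reduce the inequality to a lower bound on $|\cos\al_0-\cos\al|$ using an exact Taylor representation, and then to bound that quantity from below via concavity of $\sin$.

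First, since $f^{\prime\prime}(\al)=-\sin\al$, integrating from $\al_0$ (with Fubini for the second identity) yields the exact representations
\[
f^{\prime}(\al)=\cos\al-\cos\al_0=-\int_{\al_0}^{\al}\sin t\,\dd t,\qquad f(\al)=-\int_{\al_0}^{\al}(\al-t)\sin t\,\dd t.
\]
By reflecting $\al\mapsto -\al$ if necessary, I may assume $\sin t>0$ on the interior of $I$. Since $t$ lies between $\al_0$ and $\al$ in both integrals, $|\al-t|\le|\al-\al_0|$, and so
\[
\frac{|f(\al)|}{\bigl(f^{\prime}(\al)\bigr)^{2}}\le\frac{|\al-\al_0|\left|\int_{\al_0}^{\al}\sin t\,\dd t\right|}{\left|\int_{\al_0}^{\al}\sin t\,\dd t\right|^{2}}=\frac{|\al-\al_0|}{|\cos\al_0-\cos\al|}.
\]

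The core step is a lower bound on $|\cos\al_0-\cos\al|$ in terms of $|\sin\al_0|\cdot|\al-\al_0|$. Using the concavity of $\sin$ on $(0,\pi)$, for $\al_0\le\al\le\pi$ the graph of $\sin$ lies above the chord joining $(\al_0,\sin\al_0)$ to $(\pi,0)$, which gives $\sin t\ge\sin\al_0\,(\pi-t)/(\pi-\al_0)$ for $t\in[\al_0,\pi]$. Integrating from $\al_0$ to $\al$ and using $2\pi-\al-\al_0\ge\pi-\al_0$ yields
\[
\int_{\al_0}^{\al}\sin t\,\dd t = \cos\al_0-\cos\al\ge \tfrac{1}{2}\sin\al_0\,(\al-\al_0).
\]
The symmetric chord joining $(0,0)$ to $(\al_0,\sin\al_0)$ handles the case $0<\al\le\al_0$ analogously. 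Combining with the previous display gives $|f(\al)|/(f^{\prime}(\al))^{2}\le 2/|\sin\al_0|=2/|f^{\prime\prime}(\al_0)|$, so one may take $c=2$; the case $\sin<0$ on $I$ reduces to the above by the reflection $\al\mapsto-\al$.

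There is no deep obstacle here: the exact Taylor identities make the trivial bound $|\al-t|\le|\al-\al_0|$ sharp enough, and the only care needed is to pick the correct concavity chord in each of the two subcases $\al\gtrless\al_0$ and to reduce the case $\sin<0$ on $I$ to $\sin>0$ by reflection.
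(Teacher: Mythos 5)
Your proof is correct, and it takes a genuinely different (and cleaner) route than the paper's. The paper reduces without loss of generality to $I\subset[0,\pi]$ with $f\le 0$, uses $(\cos\al_0-\cos\al)^2=4\sin^2(\tfrac{\al+\al_0}{2})\sin^2(\tfrac{\al-\al_0}{2})$ together with a Taylor expansion of the numerator in $h=\al-\al_0$, and reduces the claim to the two elementary inequalities $\sin^2\al_0/\sin^2(\tfrac{\al+\al_0}{2})\le c$ and $|\al-\al_0|\sin\al_0/\sin^2(\tfrac{\al+\al_0}{2})\le c$, checked near the degenerate endpoints $0$ and $\pi$. You instead use the exact Taylor-remainder representations of $f$ and $f^{\prime}$ to obtain the pointwise bound $|f(\al)|/(f^{\prime}(\al))^2\le|\al-\al_0|/|\cos\al_0-\cos\al|$, and then prove the lower bound $|\cos\al_0-\cos\al|\ge\tfrac12|\sin\al_0|\,|\al-\al_0|$ by comparing $\sin$ with the chords to $(\pi,0)$ and $(0,0)$; I checked the key algebra, in particular $(\pi-\al_0)^2-(\pi-\al)^2=(2\pi-\al-\al_0)(\al-\al_0)$ and $2\pi-\al-\al_0\ge\pi-\al_0$ for $\al\le\pi$, and it goes through. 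Your argument buys an explicit constant $c=2$ and avoids the local expansion in $|h|\ll1$ that the paper uses. Two minor points, both shared with the paper's own write-up: the reduction of a general interval on which $\sin$ has constant sign to $I\subset[0,\pi]$ needs a word about $2\pi$-periodicity in addition to the reflection $\al\mapsto-\al$; and at $\al=\al_0$ the ratio is of the form $0/0$ and should be read as a limit, which your bound (finite limit $2/|\sin\al_0|$) indeed controls.
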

Lemma~\ref{lem:aux ineq} is proven in appendix~\ref{ssec:aux ineq}. Note that \eqref{fineq} applies to $\phi_l$ because the inequality in \eqref{fineq} does not change if $f$ is replaced by $cf$ and we can shift the argument $\al-\alst\to\al$. See also the first two sentences in this subsection.

Combining \eqref{int1_1 est}--\eqref{ratio bnd} finishes the proof of lemma~\ref{lem:int1 est}.

\subsection{Proof of Lemma~\ref{lem:sign change}}\label{ssec:sign change}
To prove the lemma we show that there are finitely many solutions to the equation $f(\al)=0$, $\al\in I$, uniformly in $m\not=0$. By simple calculations, we get that these solutions are obtained by solving
\be\label{alt eq}
\frac{\tan(\pi\kappa\phi^{\prime}_l(\al))}{\pi\kappa\phi^{\prime}_l(\al)}=2\frac{\phi_l(\al)\phi_l^{\prime\prime}(\al)}{(\phi^{\prime}_l(\al))^2},\ \al\in I.
\ee 
Set 
\be
v(\al):=(\vec\al-\vec\al_0)\cdot x_0-(\vec\al_0^\perp\cdot x_0)(\al-\al_0). 
\ee
Recall that by our convention, $\pa_\al\vec\al=\vec\al^\perp$. Clearly, $v(\al)$ is an entire function of $\al\in\mathbb C$. By \eqref{h-fn}, $\phi_l(\al)-\phi_l(\al_0)\equiv -m v(\al)$, $\al\in I$. Then \eqref{alt eq} becomes
\be\label{alt eq v2}
\frac{\tan(m\pi v^{\prime}(\al))}{m\pi v^{\prime}(\al)}=2\frac{v(\al)v^{\prime\prime}(\al)}{(v^{\prime}(\al))^2},\ \al\in I.
\ee 
By assumption, $v^{\prime\prime}(\al)\not=0$ in the interior of $I$. Hence we can express both sides of \eqref{alt eq v2} as functions of $s=mv^{\prime}(\al)$ to obtain
\be\label{alt eq v3}
\frac{\tan(\pi s)}{\pi s}-1=v_1(s/m)-1,\ |s|\le1/2.
\ee 
The inequality $|s|\le 1/2$ follows from assumption 3 of Lemma~\ref{lem:int1 est}. Both sides of \eqref{alt eq v3} are analytic near $s=0$ and equal zero at $s=0$ (i.e., $\al=\al_0$). As $m\to\infty$, the right-hand side converges uniformly to zero. The left-hand side equals zero only near $s=0$. Hence, for large $|m|$, all solutions to \eqref{alt eq v3} are located near $s=0$. Expanding both sides near $s=0$ gives
\be\label{alt eq exp}
c_1 s^2(1+O(s^2))=c_2(s/m)^k(1+O(|s/m|)),\ s\to0.
\ee
Here $c_1,c_2\not=0$, and $k\ge 1$ is the index of the first non-zero term in the expansion of $h_1-1$. If $k\ge 2$, the equation has no solutions if $|m|\gg1$. In the remaining case we get the equation
\be\label{two cases}
s=c/m+O(|s/m|+|s|^3)\text{ if }k=1,
\ee
for some $c\not=0$. Therefore, if $|m|$ is sufficiently large, there is one solutions if $k=1$ and no solutions - if $k\ge2$. If $|m|$ is bounded, the number of solutions is bounded as well since \eqref{alt eq} is an equality of two analytic functions.

\subsection{Proof of Lemma~\ref{lem:aux ineq}}\label{ssec:aux ineq}

Without loss of generality we can assume that $I\subset[0,\pi]$, $f^{\prime\prime}(\al)<0$ and, therefore, $f(\al)\le 0$ on $I$. Then \eqref{fineq} becomes
\be\label{faux st11}
\frac{\sin\al_0+\cos\al_0(\al-\al_0)-\sin\al}{(\cos\al_0-\cos\al)^2}\le c\frac1{\sin\al_0},\quad 0<\al,\al_0<\pi.
\ee
The denominator on the left can be zero only when $\al=\al_0$. Hence we consider the case $|h|\ll1$, where $h=\al-\al_0$. Writing $\al=\al_0+h$ and ignoring irrelevant constants gives
\be\label{faux st12}
\frac{\sin\al_0(1-\cos h)+\cos\al_0(h-\sin h)}{h^2\sin^2((\al+\al_0)/2)}\le c\frac1{\sin\al_0},\quad 0<\al,\al_0<\pi,\ |\al-\al_0|\ll 1.
\ee
Thus, suffices it to prove the following two inequalities
\be\label{faux st2}
\frac{\sin^2\al_0}{\sin^2((\al+\al_0)/2)}\le c,\
\frac{|\al-\al_0|\sin\al_0}{\sin^2((\al+\al_0)/2)}\le c,
\quad 
0<\al,\al_0<\pi,\ |\al-\al_0|\ll 1.
\ee
The denominator may approach zero only if $\al,\al_0\to0$ or $\al,\al_0\to\pi$. The two cases are analogous, so we only consider the former. The two inequalities now easily follow from the following inequalities:
\be\label{faux st3}
\frac{\al_0^2}{(\al+\al_0)^2}\le 1,\
\frac{|\al-\al_0|\al_0}{(\al+\al_0)^2}\le 1,
\quad 
\al,\al_0>0.
\ee

\section{Proofs of Lemma~\ref{lem:exc cases} and \ref{lem:generic}}\label{sec:two lems}

\subsection{Proof of Lemma~\ref{lem:exc cases}}\label{sec:exceptional}

Suppose that $\alst$ satisfies $\phi^{\prime\prime}(\alst)=0$, $\alst$ is a local maximum of $\phi^{\prime}(\al)$ and
$l_\star=\lfloor\kappa \phi^{\prime}(\alst)\rfloor$. The dependence of $l_\star$ on $m$ is omitted for simplicity. 
The sets $\{|\al|\le\pi: \kappa\phi^{\prime}(\al)\ge l_\star+(1/2)\}$ and $\{|\al|\le\pi: \kappa\phi^{\prime}(\al)\le -(l_\star+(1/2))\}$ are completely analogous. Therefore, in this section we consider only the former and denote it $I_\star$.

Even though $\kappa\phi^{\prime}(\al)$ may take the value $l_\star$ at two points (on either side of $\alst$), in this section we assume that $\al_{l_\star}^*$ is the smaller of the two (i.e., $\al_{l_\star}^*<\alst$). If $I_\star\not=\varnothing$, there exists $\al_h<\alst$ such that $\kappa\phi^{\prime}(\al_h)=l_\star+(1/2)$. Split $I_\star$ into two intervals $[\al_h,\al_\star]$ and $[\alst,2\alst-\al_h]$. The two intervals are completely analogous, so we prove \eqref{exc II} by restricting the interior sum to $\al_k\in [\al_h,\al_\star]$. 

The sum with respect to $\al_k\in [\al_h,\alst]$ reduces to an integral over $[\al_h,\alst]$ by using $l=l_\star+1$ in \eqref{sum est}. Note that Lemma~\ref{lem:sum1 est} applies regardless of whether there exists an $\al\in I$ such that $\kappa\phi^{\prime}(\al)=l$. The following lemma proves Lemma~\ref{lem:exc cases}.

\begin{lemma}\label{lem:exc cases alt2} Under the assumptions of Theorem~\ref{lem:Lyapunov}, one has
\be\begin{split}
\label{exc II alt2}
\sum_{1\le |m|\le\e^{-\ga}}&\biggl|\int_{\al_h}^{\alst} (\tilde g_m h_{l_\star+1})(\al) e\biggl(\frac{\phi_{l_\star+1}(\al)}\e\biggr)\dd \al\biggr|=O(\e)\text{ if }
M>\nu+1.
\end{split}
\ee
\end{lemma}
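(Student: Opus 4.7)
The key features of the integrand on $[\al_h,\alst]$ are: (i) $\phi_{l_\star+1}^{\prime}$ does not vanish but achieves its smallest magnitude $A:=\langle|m|\kappa|x_0|\rangle/\kappa$ at $\alst$; (ii) $\phi^{\prime\prime}(\alst)=0$ while $|\phi^{\prime\prime\prime}(\alst)|=|m||x_0|\neq 0$, producing Airy-type behavior near $\alst$; and (iii) by Assumption~\ref{ass:x0}(1), $A\ge c|m|^{-\nu_1}$ for any $\nu_1>\nu$. The plan is to convert $I_m:=\int_{\al_h}^{\alst}(\tilde g_m h_{l_\star+1})(\al)\,e(\phi_{l_\star+1}(\al)/\e)\,\dd\al$ to an Airy-type integral via a change of variables, estimate it carefully, and sum over $m$.

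First, I would change variables $u=\alst-\al$ and Taylor expand using $\phi(\al)=-|m||x_0|\cos(\al-\al_{x_0})$ together with $l_\star+1=|m|\kappa|x_0|+\kappa A$ to obtain
$$\phi_{l_\star+1}(\alst-u)=\phi_{l_\star+1}(\alst)+A u+\frac{|m||x_0|}{6}u^3+O(|m|u^5).$$
The identity $\kappa\phi^{\prime}(\al_h)=l_\star+1/2$ gives $\cos(\alst-\al_h)=1-(\{|m|\kappa|x_0|\}-1/2)/(|m|\kappa|x_0|)$, whence $u_h:=\alst-\al_h\le c/\sqrt{|m|}$. The amplitude factor $h_{l_\star+1}$ is controlled via the elementary inequality $|\sin(\pi\theta)|\ge 2|\theta|$ for $|\theta|\le 1/2$, applied to $\theta=\kappa\phi_{l_\star+1}^{\prime}(\al)$:
$$|h_{l_\star+1}(\al)|=\frac{\pi|\kappa\phi^{\prime}(\al)|}{|\sin(\pi\kappa\phi_{l_\star+1}^{\prime}(\al))|}\le \frac{\pi|\phi^{\prime}(\al)|}{2|\phi_{l_\star+1}^{\prime}(\al)|},$$
which is largest at $\alst$ with size $\sim|m|/A$. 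Combined with $|\tilde{\tilde g}_m|\le c(1+|m|)^{-M}=c\rho(m)$ from Lemma~\ref{lem:psi psider}, this yields pointwise and total-variation bounds on $G_m:=\tilde g_m h_{l_\star+1}$.

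To estimate $I_m$ I would split $[0,u_h]$ at $u^\ast:=\sqrt{2A/(|m||x_0|)}$. On the outer piece $[u^\ast,u_h]$, $|\phi_{l_\star+1}^{\prime}|\ge 2A$ and a single integration by parts suffices. On the inner Airy piece $[0,u^\ast]$, the scaling $v=u/(\e/|m|)^{1/3}$ turns the phase into $\lambda v+c_0 v^3$ with $\lambda\sim A/(|m|^{1/3}\e^{2/3})$, and one applies the third-derivative van der Corput bound (using $|\phi^{\prime\prime\prime}|\ge c|m|$ near $\alst$) together with the $L^\infty$ and $\TV$ controls on $G_m$. Summing over $m$, invoking $1/A\le c|m|^{\nu_1}$ from the type condition, and using the decay of $\rho(m)$, should then yield $\sum_{1\le|m|\le\e^{-\ga}}|I_m|=O(\e)$ provided $M>\nu+1$.

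\textbf{Main obstacle.} A direct single integration by parts on the whole interval produces a boundary term at $\alst$ of size $\e|G_m(\alst)|/|\phi_{l_\star+1}^{\prime}(\alst)|\sim \e|m|^{1-M+2\nu_1}$, whose $m$-summability would demand the stronger hypothesis $M>2\nu+2$ rather than the asserted $M>\nu+1$. Bridging this gap requires genuinely exploiting the Airy-type cancellation near $\alst$, so that the contribution from the region of small $|\phi_{l_\star+1}^{\prime}|$ decays like $\e^{2/3}/(|m|^{1/6}A^{1/2})$ rather than $\e/A$, and possibly using cancellation in the $m$-sum arising from the rapidly oscillating factor $e(\phi_{l_\star+1}(\alst)/\e)$, whose $m$-dependence is governed by the distribution of $\{|m|\kappa|x_0|\}$. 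I would follow the approach of \cite{Katsevich_2024_BV}, where analogous exceptional-interval estimates have been carried out.
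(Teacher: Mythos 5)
Your plan takes a genuinely different route from the paper, and the reason you give for rejecting the simple route rests on a miscount of the amplitude. The paper proves \eqref{exc II alt2} exactly by the ``direct single integration by parts'' that your \emph{main obstacle} paragraph dismisses. The point you miss is a cancellation: the weight $h_l$ in Lemma~\ref{lem:sum1 est} must be read as $\pi\kappa\phi_l^{\prime}(\al)/\sin(\pi\kappa\phi_l^{\prime}(\al))$ (the numerator in \eqref{h-fn} is a typo --- as written, $h_l$ would not depend on $l$ at all, and the computation \eqref{int1_1 est}, which produces $\pi\kappa g\phi_l^{\prime}/\sin(\pi\kappa\phi^{\prime})$, confirms the intended normalization). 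Consequently, when you integrate by parts, the factor $\phi_{l_\star+1}^{\prime}$ supplied by $h_{l_\star+1}$ cancels the $1/\phi_{l_\star+1}^{\prime}$ coming from $\dd\, e(\phi_{l_\star+1}/\e)$, the amplitude becomes $\pi\kappa\,\tilde g_m/\sin(\pi\kappa\phi_{l_\star+1}^{\prime})$, and the boundary term at $\alst$ is $c\e\,|\tilde g_m(\alst)|/|\sin(\pi\kappa\phi_{l_\star+1}^{\prime}(\alst))|\le c\e\,\rho(m)/\langle m\kappa|x_0|\rangle\le c\e\,|m|^{\nu_1-M}$ --- not $\e|m|^{1-M+2\nu_1}$. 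Your estimate carries a spurious extra factor $|m|/\langle m\kappa|x_0|\rangle$ from taking $|h_{l_\star+1}(\alst)|\sim|m|/A$, whereas in fact $|h_{l_\star+1}(\alst)|=\pi\Delta/\sin(\pi\Delta)\le\pi/2$ with $\Delta=\lceil \cx m\rceil-\cx m$. Summing $|m|^{\nu_1-M}$ over $m$ converges precisely when $M>\nu_1+1$ for some $\nu_1>\nu$, i.e.\ when $M>\nu+1$, which is the stated threshold; the derivative term is handled by the explicit formula $-\kappa\phi_{l_\star+1}^{\prime}(\al)=\Delta+\cx m(1-\cos(\al-\alst))$ and the same Diophantine bound.

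Beyond this, the Airy/van der Corput machinery is misdirected here and would not reach $O(\e)$. On $[\al_h,\alst]$ the relevant phase is $\phi_{l_\star+1}$, which has \emph{no} stationary point: $|\kappa\phi_{l_\star+1}^{\prime}|$ decreases monotonically from $1/2$ at $\al_h$ to $\Delta\ge\langle m\kappa|x_0|\rangle\ge c|m|^{-\nu_1}$ at $\alst$ and never vanishes --- this is exactly where Assumption~\ref{ass:x0}(1) enters. The point $\alst$ is a degenerate critical point of $\phi$, not of $\phi_{l_\star+1}$, so there is no Airy integral to evaluate; non-stationary phase yields the full factor $\e$ at the price of $1/\Delta$, which the type condition converts into $|m|^{\nu_1}$. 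By contrast, the third-derivative van der Corput bound on your inner piece gives only $O\bigl((\e/|m|)^{1/3}(\|G_m\|_\infty+\TV(G_m))\bigr)$, and even your own target $\e^{2/3}/(|m|^{1/6}A^{1/2})$ sums to $O(\e^{2/3})$ at best, which falls short of the $O(\e)$ asserted in \eqref{exc II alt2}. So the proposal, as written, has a genuine gap: it is organized around an obstacle that disappears once the amplitude is computed correctly, and the replacement machinery cannot deliver the claimed rate.
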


\begin{proof} 
Let $J$ be the integral in \eqref{exc II alt2}. Integration by parts in \eqref{exc II alt2} gives:
\be\label{Jn-eps def v2}\begin{split}
|J|&\le c\e (J_1+J_2),\\
J_1&:=\frac{|\tilde g_m(\alst)|}{|\sin(\pi\kappa \phi_{l_\star+1}^{\prime}(\alst))|}+\frac{|\tilde g_m(\al_h)|}{|\sin(\pi\kappa \phi_{l_\star+1}^{\prime}(\al_h))|},\
J_2:=\int_{\al_h}^{\alst}\biggl|\pa_\al\frac{\tilde g_m(\al)}{\sin(\pi\kappa \phi_{l_\star+1}^{\prime}(\al))}\biggr|\dd\al.
\end{split}
\ee
By \eqref{phi pr alt},
\be\label{phi cos}\begin{split}
-\kappa\phi_{l_\star+1}^{\prime}(\al)=&-\cx m\cos(\al-\alst)+(l_\star+1)
=\cx m(1-\cos(\al-\alst))-\cx m+(l_\star+1)\\
=&\Delta+\cx m(1-\cos(\al-\alst)),\ \Delta:=\lceil \cx m\rceil -\cx m,\ l_\star=\lfloor \cx m\rfloor,\ \al\in I_\star,
\end{split}
\ee
where $\lceil r\rceil:=1-\lfloor r\rfloor$ is the ceiling function. Since $\kappa |\phi_{l_\star}^{\prime}(\al_h)|=\kappa |\phi_{l_\star+1}^{\prime}(\al_h)|=1/2$, \eqref{four-coef-bnd} implies
\be\label{exc2 integr term v2}\begin{split}
J_1\le c\rho(m)\langle m\kappa\vec\al_\star^\perp\cdot x_0\rangle^{-1}.
\end{split}
\ee
Also, by \eqref{four-coef-bnd} and \eqref{phi cos},
\be\begin{split}
J_2&\le c\rho(m)\bigg(\int_{\al_h}^{\alst}\frac{|m|}{|\sin(\pi\kappa \phi_{l_\star+1}^{\prime}(\al))|}\dd\al
+\biggl|\int_{\al_h}^{\alst}\pa_\al\frac{1}{\sin(\pi\kappa \phi_{l_\star+1}^{\prime}(\al))}\dd\al\biggr|\bigg)\\
&\le c\rho(m)\bigg(\int_{\al_h}^{\alst}\frac{|m|\dd\al}{\Delta+\cx m(1-\cos(\al-\alst))}
+\frac{1}{|\phi_{l_\star}^{\prime}(\alst)|}\bigg)\\
&\le c\rho(m)\big[(|m|/\langle m\kappa\vec\al_\star^\perp\cdot x_0\rangle)^{1/2}+\langle m\kappa\vec\al_\star^\perp\cdot x_0\rangle^{-1}\big]\le c\frac{\rho(m)}{\langle m\kappa\vec\al_\star^\perp\cdot x_0\rangle}.
\end{split}
\ee
In the first line we used that $|\kappa \phi_{l_\star+1}^{\prime}(\al)|\le 1/2$, $\al\in[\al_h,\alst]$, and the functions $\cos(\pi\kappa \phi_{l_\star+1}^{\prime}(\al))$ and $\phi^{\prime\prime}(\al)\equiv\phi_{l_\star+1}^{\prime\prime}(\al)$ do not change sign on that interval. Thus, the bounds for $J_1$ and $J_2$ are the same. Adding these bounds over $1\le |m|\le e^{-\ga}$ we see that the sum is finite if $M>\nu+1$. 
\end{proof}

\subsection{Proof of Lemma~\ref{lem:generic}}\label{sec:generic}

Throughout this subsection, $\al_r^*$ denote locally unique solutions of $\kappa\phi^{\prime}(\al)=r$, $|r|\le |m \kappa x_0|$. 
 
From \eqref{four-coef-bnd} and \eqref{int1 est},
\be\label{easier bnd}
\biggl|\int_{I} g(\al) h_l(\al) e\biggl(\frac{\fks(\al)}\e\biggr)\dd \al\biggr|\le W_{l,m}\le
c\frac{\rho(m)|m|\e^{1/2}}{(\phi^{\prime\prime}(\al_l^*))^{1/2}},\ m\not=0.
\ee
We sum the right-hand side of \eqref{easier bnd} over $|l|\le l_{\star}(m)$ and then over $1\le|m|\le\e^{-\ga}$. 
Begin by looking at the sums $\sum_{|l|\le l_{\star}(m)}(\phi^{\prime\prime}(\al_l^*))^{-1/2}$. 
Recall that (see Figure~\ref{fig:sine}),
\be
I_l=[\al_{l-(1/2)}^*,\al_{l+(1/2)}^*],\ |l|<l_\star;\
I_l=\begin{cases}
[\al_{l-(1/2)}^*,\al_{l+(1/2)}^*],& |l|=l_\star,I_\star\not=\varnothing,\\
[\al_{l_\star-(1/2)}^*,\alst],& l=l_\star,I_\star=\varnothing,\\
[\alst,\al_{l_\star+(1/2)}^*],& l=-l_\star,I_\star=\varnothing.
\end{cases} 
\ee
Even if $I_\star=\varnothing$ and $\alst\in I_{l_\star}$, Lemma~\ref{lem:int1 est} still applies (e.g., condition 1 of the lemma is not violated even though $\phi^{\prime\prime}(\alst)=0$) because $\alst$ is an endpoint of $I_{l_\star}$, it is not in the interior of $I_{l_\star}$. The same applies to $I_{-l_\star}$.

By \eqref{phi pr alt},
\be\begin{split}
|\phi^{\prime\prime}(\al_l^*)|&=\big[(\cx m)^2-l^2\big]^{1/2}
\ge c|m|^{1/2}
\begin{cases}
\langle m\kappa\vec\al_\star^\perp\cdot x_0\rangle^{1/2},& |l|=l_\star,\\
(l_{\star}-|l|)^{1/2},& |l|<l_\star.
\end{cases}
\end{split}
\ee
Then,
\be\label{sum 21}\begin{split}
\sum_{|l|\le l_\star(m)}\frac1{(\phi^{\prime\prime}(\al_l^*))^{1/2}}
&\le \frac{c}{|m|^{1/4}}\biggl[\langle m\kappa\vec\al_\star^\perp\cdot x_0\rangle^{-1/4}+\sum_{l=0}^{ l_\star-1}(l_{\star}-l)^{-1/4}\biggr]\\
&\le c|m|^{-1/4}\bigl[\langle m\kappa\vec\al_\star^\perp\cdot x_0\rangle^{-1/4}+|m|^{3/4}\bigr],
\end{split}
\ee
and 
\be\label{sum 22}\begin{split}
\sum_{1\le|m|\le\e^{-\ga}}&\frac{\rho(m)|m|}{|m|^{1/4}}\bigl[\langle m\kappa\vec\al_\star^\perp\cdot x_0\rangle^{-1/4}+|m|^{3/4}\bigr]
\le c,\quad M>\max((\nu+7)/4,5/2).
\end{split}
\ee
Consequently, (cf. \eqref{easier bnd}) 
\be\label{sum 23}
\sum_{1\le|m|\le\e^{-\ga}}\sum_{|l|\le l_\star(m)} W_{l,m}=O(\e^{1/2}),\ M>\max((\nu+7)/4,5/2),
\ee
and Lemma~\ref{lem:generic} is proven.

\section*{Acknowledgments}
The work of AK was supported in part by NSF grant DMS-1906361. JWW wishes to acknowledge funding support from The V Foundation, The Brigham Ovarian Cancer Research Fund, Abcam Inc., and Aspira Women's Health.

\bibliographystyle{plain}
\bibliography{refs, My_Collection}

\begin{thebibliography}{10}

\bibitem{AdlerGeomRF2010}
Robert~J. Adler.
\newblock {\em {The Geometry of Random Fields}}.
\newblock Society for Industrial and Applied Mathematics, Philadelphia, PA, 2010.

\bibitem{ath_book}
Krishna~B. Athreya and Soumendra~N. Lahiri.
\newblock {\em {Measure theory and probability theory}}.
\newblock Springer, New York, NY, springer m edition, 2006.

\bibitem{Bissantz2014}
Nicolai Bissantz, Hajo Holzmann, and Katharina Proksch.
\newblock {Confidence regions for images observed under the Radon transform}.
\newblock {\em Journal of Multivariate Analysis}, 128:86--107, 2014.

\bibitem{Cavalier_98}
L.~Cavalier.
\newblock Asymptotically efficient estimation in a problem related to tomography.
\newblock {\em Math. Methods Statist.}, 7(4):445--456 (1999), 1998.

\bibitem{Cavalier_00}
Laurent Cavalier.
\newblock Efficient estimation of a density in a problem of tomography.
\newblock {\em Ann. Statist.}, 28(2):630--647, 2000.

\bibitem{dhara2016combination}
Ashis~Kumar Dhara, Sudipta Mukhopadhyay, Anirvan Dutta, Mandeep Garg, and Niranjan Khandelwal.
\newblock A combination of shape and texture features for classification of pulmonary nodules in lung {CT} images.
\newblock {\em Journal of digital imaging}, 29:466--475, 2016.

\bibitem{dhara2016differential}
Ashis~Kumar Dhara, Sudipta Mukhopadhyay, Pramit Saha, Mandeep Garg, and Niranjan Khandelwal.
\newblock Differential geometry-based techniques for characterization of boundary roughness of pulmonary nodules in {CT} images.
\newblock {\em International journal of computer assisted radiology and surgery}, 11:337--349, 2016.

\bibitem{Divel2020}
Sarah~E. Divel and Norbert~J. Pelc.
\newblock {Accurate Image Domain Noise Insertion in CT Images}.
\newblock {\em IEEE Transactions on Medical Imaging}, 39(6):1906--1916, 2020.

\bibitem{far04}
A.~Faridani.
\newblock {Sampling theory and parallel-beam tomography}.
\newblock In {\em Sampling, wavelets, and tomography}, volume~63 of {\em Applied and Numerical Harmonic Analysis}, pages 225--254. Birkhauser Boston, Boston, MA, 2004.

\bibitem{gakhov}
F~D Gakhov.
\newblock {\em {Boundary Value Problems}}.
\newblock Pergamon Press, Oxford, 1966.

\bibitem{GilbTrud}
David Gilbarg and Neil~S. Trudinger.
\newblock {\em {Elliptic Partial Differential Equations of Second Order}}.
\newblock Springer, Berlin, Heidelberg, 2001.

\bibitem{Katsevich2017a}
Alexander Katsevich.
\newblock {A local approach to resolution analysis of image reconstruction in tomography}.
\newblock {\em SIAM Journal on Applied Mathematics}, 77(5):1706--1732, 2017.

\bibitem{kat19a}
Alexander Katsevich.
\newblock {Analysis of reconstruction from discrete {Radon transform data in $\mathbb R^3$} when the function has jump discontinuities}.
\newblock {\em SIAM Journal on Applied Mathematics}, 79:1607--1626, 2019.

\bibitem{Katsevich2020b}
Alexander Katsevich.
\newblock {Analysis of resolution of tomographic-type reconstruction from discrete data for a class of distributions}.
\newblock {\em Inverse Problems}, 36(12), 2020.

\bibitem{Katsevich2020a}
Alexander Katsevich.
\newblock {Resolution analysis of inverting the generalized Radon transform from discrete data in $\mathbb R^3$}.
\newblock {\em SIAM Journal of Mathematical Analysis}, 52(4):3990--4021, 2020.

\bibitem{Katsevich_2024_BV}
Alexander Katsevich.
\newblock {Analysis of tomographic reconstruction of objects in $\mathbb R^2$ with rough edges}.
\newblock {\em arXiv}, (2312.08259 [math.NA]), 2023.

\bibitem{Katsevich_aliasing_2023}
Alexander Katsevich.
\newblock {Analysis of view aliasing for the generalized Radon transform in $\mathbb R^2$}.
\newblock {\em SIAM Journal on Imaging Sciences}, to appear, 2023.

\bibitem{Katsevich2022a}
Alexander Katsevich.
\newblock {Novel resolution analysis for the Radon transform in $\mathbb R^2$ for functions with rough edges}.
\newblock {\em SIAM Journal of Mathematical Analysis}, 55:4255--4296, 2023.

\bibitem{Katsevich2021a}
Alexander Katsevich.
\newblock {Resolution analysis of inverting the generalized $N$-dimensional Radon transform in $\mathbb R^n$ from discrete data}.
\newblock {\em Journal of Fourier Analysis and Applications}, 29, art. 6, 2023.

\bibitem{Katsevich2023a}
Alexander Katsevich.
\newblock {Resolution of 2D reconstruction of functions with nonsmooth edges from discrete Radon transform data}.
\newblock {\em SIAM Journal on Applied Mathematics}, 83(2):695--724, 2023.

\bibitem{Keys1981}
R.~G. Keys.
\newblock {Cubic Convolution Interpolation for Digital Image Processing}.
\newblock {\em IEEE Transactions on Acoustics, Speech, and Signal Processing}, ASSP-29(6):1153--1160, 1981.

\bibitem{Khoshnevisan2002}
Davar Khoshnevisan.
\newblock {\em {Multiparameter Processes. An Introduction to Random Fields}}.
\newblock Springer-Verlag, New York, 2002.

\bibitem{Tsybakov_1991}
A.~P. Korostel\"{e}v and A.~B. Tsybakov.
\newblock Optimal rates of convergence of estimators in a probabilistic setup of tomography problem.
\newblock {\em Problems of information transmission}, 27:73--81, 1991.

\bibitem{Tsybakov_92}
A.~P. Korostel\"{e}v and A.~B. Tsybakov.
\newblock Asymptotically minimax image reconstruction problems.
\newblock In {\em Topics in nonparametric estimation}, volume~12 of {\em Adv. Soviet Math.}, pages 45--86. Amer. Math. Soc., Providence, RI, 1992.

\bibitem{KN_06}
L~Kuipers and H~Niederreiter.
\newblock {\em {Uniform Distribution of Sequences}}.
\newblock Dover Publications, Inc., Mineola, NY, 2006.

\bibitem{Monard_19}
Fran\c{c}ois Monard, Richard Nickl, and Gabriel~P. Paternain.
\newblock Efficient nonparametric {B}ayesian inference for {X}-ray transforms.
\newblock {\em Ann. Statist.}, 47(2):1113--1147, 2019.

\bibitem{Monard2021}
Francois Monard and Plamen Stefanov.
\newblock {Sampling the X-ray transform on simple surfaces}.
\newblock {\em SIAM Journal on Mathematical Analysis}, 53(3):1707--1736, 2023.

\bibitem{Naito2004}
K.~Naito.
\newblock {Classifications of irrational numbers and recurrent dimensions of quasi-periodic orbits}.
\newblock {\em Journal of Nonlinear and Convex Analysis}, 5(2):169--185, 2004.

\bibitem{nat93}
F.~Natterer.
\newblock {Sampling in Fan Beam Tomography}.
\newblock {\em SIAM Journal on Applied Mathematics}, 53:358--380, 1993.

\bibitem{pal95}
V.~P. Palamodov.
\newblock {Localization of harmonic decomposition of the {Radon} transform}.
\newblock {\em Inverse Problems}, 11:1025--1030, 1995.

\bibitem{Siltanen2003}
S~Siltanen, V~Kolehmainen, S~J Rvenp, J~P Kaipio, P~Koistinen, M~Lassas, J~Pirttil, and E~Somersalo.
\newblock Statistical inversion for medical {X}-ray tomography with few radiographs: I. general theory.
\newblock {\em Physics in Medicine and Biology}, 48(10):1437--1463, may 2003.

\bibitem{stef20}
P.~Stefanov.
\newblock {Semiclassical sampling and discretization of certain linear inverse problems}.
\newblock {\em SIAM Journal of Mathematical Analysis}, 52:5554--5597, 2020.

\bibitem{stef_23}
Plamen Stefanov and Samy Tindel.
\newblock Sampling linear inverse problems with noise.
\newblock {\em Asymptot. Anal.}, 132(3-4):331--382, 2023.

\bibitem{Lassas_09}
Simopekka V\"{a}nsk\"{a}, Matti Lassas, and Samuli Siltanen.
\newblock Statistical {X}-ray tomography using empirical {B}esov priors.
\newblock {\em Int. J. Tomogr. Stat.}, 11(S09):3--32, 2009.

\bibitem{Noo_noise_2008}
Adam Wunderlich and Fr{\'{e}}d{\'{e}}ric Noo.
\newblock {Image covariance and lesion detectability in direct fan-beam X-ray computed tomography}.
\newblock {\em Physics in Medicine and Biology}, 53(10):2471--2493, 2008.

\end{thebibliography}
\end{document}